\newtheorem{thm}{Theorem}[section] 
\newtheorem{lem}[thm]{Lemma}     
\newtheorem{cor}[thm]{Corollary}
\newtheorem{prop}[thm]{Proposition}
\title[Cluster algebras arising from cluster tubes]
 {Cluster algebras arising from cluster tubes} 
\author{Yu Zhou and Bin Zhu}
\def\s{\stackrel}
\providecommand{\add}{\mathop{\rm add}\nolimits}%
\providecommand{\Hom}{\mathop{\rm Hom}\nolimits}%
\providecommand{\End}{\mathop{\rm End}\nolimits}%
\providecommand{\Ext}{\mathop{\rm Ext}\nolimits}%
\providecommand{\Gr}{\mathop{\rm Gr}\nolimits}%
\providecommand{\coker}{\mathop{\rm coker}\nolimits}%
\providecommand{\id}{\mathop{\rm id}\nolimits}%
\providecommand{\ind}{\mathop{\rm ind}\nolimits}%
\def\text{\mbox}
\def\m{\multiput}
\def\A{\mathcal{A}}
\def\C{\mathcal{C}}
\def\D{\mathcal{D}}
\def\T{\mathcal{T}}
\def\X{\mathfrak{X}}
\def\Y{\mathfrak{Y}}
\begin{document}
\maketitle

\begin{abstract}
We study the cluster algebras arising from cluster
tubes with rank bigger than $1$. Cluster tubes are $2-$Calabi-Yau triangulated categories which
contain no cluster tilting objects, but maximal rigid objects. Fix a
certain maximal rigid object $T$ in the cluster tube $\C_n$ of rank $n$. For
any indecomposable rigid object $M$ in $\C_n$, we define an analogous
$X_M$ of Caldero-Chapton's formula (or Palu's cluster character
formula) by using the geometric information of $M$. We show that
$X_M, X_{M'}$ satisfy the mutation formula when $M,M'$ form an
exchange pair, and that $X_{?}: M\mapsto X_M$ gives a bijection from
the set of indecomposable rigid objects in $\C_n$ to the set of
cluster variables of cluster algebra of type $C_{n-1}$, which
induces a bijection between the set of basic maximal rigid objects
in $\C_n$ and the set of clusters. This strengths a surprising
result proved recently by Buan-Marsh-Vatne that the combinatorics of
maximal rigid objects in the cluster tube $\C_n$ encode the
combinatorics of the cluster algebra of type $B_{n-1}$ since the
combinatorics of cluster algebras of type $B_{n-1}$ or of type
$C_{n-1}$ are the same by a result of Fomin and Zelevinsky.
As a consequence, we give a categorification of cluster
algebras of type $C$.
\end{abstract}


\section{Introduction} 
\label{Sec:intro}

Cluster algebras were introduced around 2000 by Fomin-Zelevinsky
\cite{FZ1} in order to give an algebraic and combinatorial framework for
the canonical basis of quantum groups and for the notion of total
positivity for semisimple algebraic groups, see \cite{F,FZ4} for a nice
survey on this topic and its background. Since they were introduced,
interesting connections between such algebras and several branches
of mathematics have emerged. In the categorification theory of
cluster algebras, cluster categories \cite{Ke1,CCS,BMRRT,Du1,Am}, and (stable)
module categories over preprojective algebras \cite{GLS1,GLS2,BIRS}
play a central role. They all have cluster tilting objects, which model the clusters of the
corresponding cluster algebras via Caldero-Chapoton's formula \cite{CC}
in the case of cluster categories or Gei\ss-Leclerc-Shr\"{o}er's map
\cite{GLS1} in the case of preprojective algebras. This motivates the
study of arbitrary $2-$Calabi-Yau triangulated categories with
cluster tilting objects (subcategories). Palu defined a cluster
character for any Hom-finite $2-$Calabi-Yau triangulated categories which have cluster
tilting objects \cite{Pa1,Pa2} (see also \cite{FK}). Recently Plamondon defined
cluster characters for Hom-infinite $2-$Calabi-Yau triangulated categories with
some hypotheses \cite{Pla1,Pla2}.

It was proved in \cite{IY,BIRS} that
one can mutate a cluster tilting object $T=\oplus_{i=1}^{n}T_i$ (resp. maximal
rigid object) at any indecomposable direct summand $T_i$ to get a new
cluster tilting object $\mu_i(T)$ (resp. maximal rigid object) via
exchange triangles in a $2-$Calabi-Yau triangulated category $\C$. To any
maximal rigid object $T$, one can associate an integer matrix $A_T$
by using the exchange triangles (where $A_T$ is the
transpose of $B_T$ defined in \cite{BMV}, see Section \ref{Sec:Pre}). If $A_T$ and $A_{\mu_i(T)}$
are related by Fomin-Zelevinsky's matrix mutation for any maximal
rigid object $T$ and any direct summand $T_i$, then we say that the maximal rigid objects form a
cluster structure in $\C$ \cite{BIRS,BMV}. In \cite{BMV}, Buan-Marsh-Vatne
showed that maximal rigid objects without $2-$cycles form a cluster
structure in $\C$ (see also \cite{BIRS}). Therefore cluster tilting objects and
maximal rigid objects are important objects in $2-$Calabi-Yau triangulated
categories. They have many nice properties, see for example \cite{KR,KZ,IY,DK,V,Y,ZZ}.
Cluster tilting objects are obviously
maximal rigid, but the converse is not true, see \cite{BIKR} for the
first examples. Cluster tubes provide the second examples, in which
the quivers of endomorphism algebras of
maximal rigid objects contain loops, but no $2-$cycles \cite{BMV}.
A cluster tube of rank $n$, denoted by $\C_n$, is by definition, the orbit
category by $\tau^{-1}[1]$ of the derived category of the hereditary
abelian category of nilpotent representations of the quiver with
underlying graph $\widetilde{A}_{n-1}$ and with cyclic orientation.
It is a $2-$Calabi-Yau triangulated category \cite{BKL,Ke1}. In \cite{BMV}, a
classification of maximal rigid objects in the cluster tube $\C_n$
is given. The maximal rigid objects are proved to form a cluster
structure. Furthermore they use the geometric description of the
exchange graph of the cluster algebra of type $B_{n-1}$ in \cite{FZ2} to
prove that there is a bijection between the set of indecomposable
rigid objects in the cluster tube $\C _n$ and the set of cluster
variables of the cluster algebra of type $B_{n-1}$. Under this
bijection, maximal rigid objects go to clusters. Since the
cluster combinatorics of the cluster algebra of type $C_{n-1}$ is the
same as that of the cluster algebra of type $B_{n-1}$ by Proposition
3.15 in \cite{FZ2}, there is a bijection  between the set of
indecomposable rigid objects in the cluster tube $\C _n$ and the set of
cluster variables of the cluster algebra of type $C_{n-1}$.

The aim of the paper is to study the cluster algebras arising from
cluster tubes. This is the first attempt to the well-known question
how to define cluster characters with respect to a maximal rigid
object in a $2-$Calabi-Yau triangulated category, in which maximal
rigid objects may have loops (compare to \cite{Pla1,Pla2}). We give an
analogue of Caldero-Chapoton's formula \cite{CC} (or Palu's character
\cite{Pa1}) for cluster tubes. Fix a certain basic maximal rigid object $T$ in
the cluster tube $\C_n (n>1)$. $A_T$ denotes the skew-symmetrizable
matrix associated with $T$, which is of type $C_{n-1}$ \cite{BMV} (please
see the precise meaning in Section \ref{Sec:Pre}). For any indecomposable rigid
object $M$ in $\C_n$, with respect to $T$, we define a Laurent polynomial
$X_M$. We prove that the formula $X_M$ satisfies the mutation formula for cluster variables: i.e. if $M$ and
$M^*$ are indecomposable rigid objects such that $M\oplus N$ and
$M^*\oplus N$, for some rigid object $N$, are maximal rigid objects
in $\C_n$, then $X_M\cdot X_{M^*}=X_E+X_E'$ where $E, E'$ are the
middles of the exchange triangles: $M\rightarrow E\rightarrow
M^*\rightarrow M[1]$, $M^*\rightarrow E'\rightarrow M\rightarrow
M^*[1]$. We note here that the dimension of Ext$^1(M,M^*)$ can be
$2$ (compare to the cases considered before in \cite{CC,FK,Pa1,Pla1,Pla2},
where the $k-$dimension of Ext$^1(M,M^*)$ is always one).
Thus the map $X_{?}$ gives a bijection from the set of
indecomposable rigid objects in $\C_n$ to the set of cluster
variables of the cluster algebra of type $C_{n-1}$. This gives an
explicit bijection parallel with that given by Buan-Marsh-Vatne
\cite{BMV} for type $B_{n-1}$ (since there is a natural bijection between
type $B_{n-1}$ and type $C_{n-1}$, see \cite{FZ2}). The algebra generated
by the $X_M$, where $M$ runs over all indecomposable rigid objects in
$\C_n$ is isomorphic to the cluster algebras of type $C_{n-1}$.  In
\cite{Du2}, Dupont proved the multiplication formula for cluster
characters associated to regular modules over the path algebra of
any representation-infinite quiver; Ding-Xu \cite{DX} also defined an
analogous map for cluster tubes and gave multiplication formulas.
But their formulas are not the exchange formula for cluster
variables on the one hand, and their maps can not be used to realize
the cluster structure of $\C_n$ on the other hand.

The paper is organized as follows: In Section \ref{Sec:Pre}, we recall some
basics on cluster algebras and $2-$Calabi-Yau triangulated categories. In
particular, we recall the definition of cluster tubes and basic
descriptions on indecomposable rigid objects in cluster tubes from \cite{BMV}.
In Section \ref{Sec:cha}, for any positive number $n>1$, fix a basic maximal
rigid object $T$ in $\C_n$, we calculate the index of any
indecomposable rigid object $M$ with respect to $T$ (defined in \cite{Pa1,DK,Pla1,Pla2})
and define the analogue $X_M$ of the CC-map or Palu's map for an
indecomposable rigid object $M$ with respect to $T$. This map
$X_{?}$ is called cluster map. Using the structure of the
cluster tube $\C_n$ of rank $n$, we divide the set of indecomposable
rigid objects into three disjoint subsets. Using the structure of
endomorphism algebras of $T$ \cite{BMV,V,Y,ZZ}, we calculate the
explicit formula of $X_M$ according to which subset $M$ belongs to.
In Section \ref{Sec:mu}, we prove that $X_M, X_{M^*}$ satisfy the mutation
formula when $M, M^*$ form a mutation pair. Using the mutation
triangles, we explain that the matrix $A_T$ associated with $T$ is a
skew-symmetrizable matrix of type $C_{n-1}$. We prove that the map
$X_{?}$ gives a bijection between the set of indecomposable rigid
objects in $\C_n$ and the set of cluster variables of $A_T$, which
induces a bijection between the set of basic maximal rigid
objects and the set of clusters of $A_T$. It follows that the
cluster algebra generated by $X_M$, where $M$ runs over all
indecomposable rigid objects, is isomorphic to the cluster algebra
of type $C_{n-1}$. In the final section, we give an application of
the cluster map $X_{?}$. We prove that the simplicial complex
generated by the indecomposable rigid objects in $\C_n$ gives a
realization of the cluster complex of the root system of type
$C_{n-1}$ defined in \cite{FZ2}.

\section{Preliminaries}
\label{Sec:Pre}

We recall some basic notions on cluster algebras which can be found
in the papers by Fomin and Zelevinsky \cite{FZ1,FZ2,FZ3}. The cluster algebras
we deal with in this paper are without coefficients.

Let $\mathcal{F}=\mathbb{Q}(x_1,x_2,\cdots, x_n)$ be the field of
rational functions in indeterminates $x_1,x_2,\cdots, x_n$. Set
$\underline{x}=\{x_1,x_2,\cdots, x_n\}.$ Let $A=(a_{ij})$ be an
$n\times n$ skew-symmetrizable integer matrix. For any $k\in \{1, 2, \cdots, n\}$, the
mutation $\mu_k(A)$ of $A$ in direction $k$ is by definition, an
integer matrix $A'=(a'_{ij})$, where
\[ a'_{ij}=\left\{ \begin{array}{lccccl}-a_{ij}&&&&&\mbox{if } i=k
  \mbox{ or }j=k,\\
  a_{ij}+\frac{|a_{ik}|a_{kj}+a_{ik}|a_{kj}|}{2}&&&&&
  \mbox{otherwise.}\end{array}\right. \]
$A'$ is a skew-symmetrizable matrix too. A seed is a pair $(\underline{u}, A)$, where $\underline{u}=\{u_1,u_2,\cdots,
u_n\}$ is a transcendence base of $\mathcal{F}$ and $A$ is an
$n\times n$ skew-symmetrizable integer matrix.
A mutation $\mu_k(\underline{u}, A)$ of a seed $(\underline{u}, A)$
in direction $k$ is a new seed $(\underline{u'}, A')$, where
$A'=\mu_k(A)$, and $\underline{u'}=(\underline{u}\setminus\{u_k\})\bigcup
\{u'_k\}$, where $u'_k$ is defined in the following mutation formula:
\[u_ku'_k=\prod_{a_{ik}>0}u_i^{a_{ik}}+\prod_{a_{ik}<0}u_i^{-a_{ik}}.\]

The cluster algebra $\mathcal{A}_{A}$ associated to the
skew-symmetrizable matrix $A$ is by definition the subalgebra of
$\mathcal{F}$ generated by all $u_i$ in $\underline{u}$ such that
$(\underline{u}, A')$ is obtained from $(\underline{x}, A)$ by mutations for some $A'$. Such
$\underline{u}=(u_1,u_2,\cdots, u_n)$ is called a cluster of the
cluster algebra $\mathcal{A}_{A}$ or simply of the matrix $A$, and
any $u_i$ is called a cluster variable. The seed
$(\underline{x}, A)$ is called an initial seed. The set  of all cluster
variables is  denoted by $\chi _A .$  If the set $\chi _A$ is
finite, then the cluster algebra $\mathcal{A}_{A}$ is called of
finite type. For any skew-symmetrizable integer matrix $A$, one can
define the Cartan part $C_A$ of $A$ as follow: $C_A=(c_{ij})_{n\times n}$, where
\[c_{ij}=\left\{ \begin{array}{lccccl}-|a_{ij}|&&&&&\mbox{if } i\not=j,\\
  2&&&&&
  \mbox{if }i=j.\end{array}\right. \]
It was proved by Fomin-Zelevinsky \cite{FZ3} that cluster algebras are
of finite type if and only if there is a seed $(\underline{u},A')$ obtained
from the initial seed $(\underline{x},A)$ by mutations such that the Cartan
part $C_{A'}$ of $A'$ is of finite type. In this case, the type of
the Cartan matrix $C_{A}$ is called the type of the cluster algebra
$\mathcal{A}_A$. For example, if \[C_{A'}=
\left(\begin{array}{ccccc}2&-1&0&\cdots& 0\\
-2&2&-1&\cdots&0\\
&&\cdots&\cdots&\\
0&0&\cdots&2&-1\\
 0&&\cdots&-1&2\end{array}\right),\] then the cluster algebra is called of type $C_n$.
\medskip

Now we recall some basics on $2-$Calabi-Yau triangulated categories.
Fix an algebraically closed field $k$. A triangulated category $\C$ is called
$k-$linear provided all Hom-spaces in $\C$ are $k-$spaces and the compositions of
 maps are $k-$linear. The $k-$linear triangulated categories in this paper will be assumed Hom-finite and Krull-Remak-Schmidt, i.e.
  $\dim _k\Hom_\C(X,Y)<\infty $ for any two objects $X$ and $Y$ in $\C$, and every object decomposes into a finite direct sum of objects having
local endomorphism rings. We fix some notations. For an object $M$ in $\C_n$, denote by add$M$ the subcategory of
$\C_n$ consisting of (finite) direct sums of direct summands of $M$.
For two subcategories $\D_1, \D_2$ of $\C_n$, denote by $\D_1*\D_2$
the full subcategory of $\C_n$ consisting of object $E$ such that
there is a triangle $D_1\rightarrow E\rightarrow D_2\rightarrow D_1[1]$, where $D_i\in \D_i$, for $i=1,2.$

A $k-$linear triangulated category $\C$ is called $2-$Calabi-Yau
if there is a functorial isomorphism
$\Hom_\C(X,Y)\cong D\Hom_\C(Y,X[2])$ for any objects $X,Y\in \C $, where
$D=\Hom_k(-,k)$. The main examples of $2-$Calabi-Yau triangulated categories from representation theory of algebras are the cluster
categories of abelian hereditary categories with tilting objects \cite{BMRRT,Ke1};
and the Hom-finite generalized cluster categories of algebras with
global dimension of at most $2$ \cite{Am}; the stable categories of Cohen-Macaulay modules
\cite{BIRS}, cluster tubes \cite{BKL} and some others, please see the survey \cite{Ke3}.

Cluster tilting objects are defined first in cluster categories \cite{BMRRT}, which are generalized to
arbitrary $2-$Calabi-Yau triangulated categories by Keller and Reiten in \cite{KR}.

\begin{defn}
Let $T$ be an object of a $2-$Calabi-Yau triangulated category $\C$.
\begin{enumerate}[4]
\item $T$ is called basic if any two indecomposable summands of $T$ are not isomorphic.
\item $T$ is rigid provided $\Ext_\C^1(T,T)=0$.
\item $T$ is maximal rigid provided $T$ is rigid and is maximal
with respect to this property, i.e. if $\Ext_\C^1(T\oplus M,T\oplus M)=0$,
then $M\in\add T$.
\item $T$ is cluster-tilting provided for any $M\in \C$, $M\in \add T$ if and only if $\Ext_C^1(M,T)=0$.
\end{enumerate}
\end{defn}

From the definition, any cluster tilting object is maximal rigid,
but the converse is not true. It was proved in \cite{BMV} that the
cluster tube $\C_n$ of rank $n$ ($n>1$) has no cluster tilting objects, but maximal
rigid objects.  See \cite{BIKR} for more such examples. The $2-$Calabi-Yau triangulated
categories with cluster tilting objects are important for the
categorification of cluster algebras of skew-symmetric matrices, see
the survey \cite{Ke2,Re} and the references there.

Fix a basic maximal rigid object $T=T_1\oplus \cdots\oplus T_n$ with all $T_j$ indecomposable. For an
$i\in\{1,\cdots,n\}$, write $\bar{T}=\oplus
_{j\not= i}T_j$. Then there are two
non-split triangles:

\[\begin{array}{c}T_i \s{f_i}{\rightarrow}E_i\s{g_i}{\rightarrow}T^*_i \rightarrow T_i[1],\\
T^*_i\s{f'_i}{\rightarrow}E'_i\s{g'_i}{\rightarrow}T_i\rightarrow T^*_i[1],\end{array}\]

where $f_i$ and $f'_i$ are minimal left $\bar{T}-$approximations;
$g_i$ and $g'_i$ are minimal right $\bar{T}-$approximations.
Furthermore $T^*_i$ is indecomposable and $\bar{T}\oplus T^*_i$ is
maximal rigid \cite{IY,BIRS}. Define the mutation of maximal rigid
object $T$ in direction $i$ to be $\mu_i(T)=\bar{T}\oplus T_i$.
It is easy to see that $\mu_i\circ\mu_i(T)=T$. The two triangles
above are called exchange triangles. We define an integer matrix
$A_T=(a_{ij})$ as follows: \[a_{ij}=\alpha_{ij}-\alpha'_{ij},\] where $\alpha_{ij}$ denotes the
multiplicity of $T_i$ as a direct summand of $E_j$, $\alpha'_{ij}$ denotes the
multiplicity of $T_i$ as a direct summand of $E_j'$. Note that
$a_{ii}=0$. Our definition of the matrix $A_T$ associated to $T$ is the
transpose of the matrix $B_T$ defined in \cite{BMV}.
When the endomorphism algebra of $T$ contains no loops or
$2-$cycles, $A_T, B_T$ are skew-symmetric matrices, and
$A_T=-B_T$. In general the matrices $A_T, B_T$ are
sign-skew-symmetric (see Lemma~1.2 in \cite{BMV}).

\begin{rem}
We use our definition of the matrix $A_T$ associated to $T$ to
replace the matrix $B_T$ defined in \cite{BMV} since we will use the
mutation formula $u_ku'_k=\prod_{a_{ik}>0}u_i^{a_{ik}}+\prod_{a_{ik}<0}u_i^{-a_{ik}}$ for the definition of cluster
algebras, where the index $a_{ik}$ comes from the $k-$th column of
the matrix $A$ (see \cite{FZ1}).
\end{rem}

Let $\C$ be a $2-$Calabi-Yau triangulated category with maximal rigid
objects. Suppose that for all maximal rigid objects,
$E_i$ and $E'_i$ have no common direct summands for any $i\in \{1,\cdots,n\}.$ Then
$\mu_i(A_T)=A_{\mu_i(T)}$ (equivalent to $\mu_i(B_T)=B_{\mu_i(T)}$ proved in \cite{BMV}).
In this case one say that the maximal rigid objects form a cluster structure in $\C$ \cite{BMV}.

\bigskip

In what follows, we will focus on cluster tubes, special $2-$Calabi-Yau
triangulated categories. We will denote the tube of rank $n$ by $\T_n$ , where $n$ is always
assumed to be greater than $1$. One realization of this category is the
category of finite-dimensional nilpotent representations over $k$ of
the cyclic quiver $\overrightarrow{\Delta}_{n}$ with $n$ vertices
such that arrows are going from $i$ to $i+1$ (taken modulo $n$). It
is a $k-$linear hereditary abelian category which is Hom-finite, i.e.
dimHom$_{\T_n}(X,Y)<\infty$ for any $X,Y\in\T_n$. Each indecomposable representation is
uniserial, i.e. it has a unique composition series, and hence is
determined by its socle and its length up to isomorphism. We denote
by $(a,b)$ in $\T_n$ the unique indecomposable object with socle
(a,1) and quasi-length b, where $(a,1)$ is the simple representation
at vertex $a$, $a\in \{1,\cdots,n\}$ (see Figure \ref{Fig:tube}).
For convenience, $(a,0)$ denote a zero object. $\T_n$
has Auslander-Reiten sequences, and the Auslander-Reiten translation
$\tau$ is an automorphism of $\T_n$: \[\tau (a, b)=(a-1,b).\]

\begin{figure}[h]
\centering
\setlength{\unitlength}{0.75cm}
\begin{picture}(12,5)
\m(5.1,3.4)(1,-1){1}{\vector(1,-1){0.9}}
\m(3.1,3.4)(1,-1){3}{\vector(1,-1){0.9}}
\m(2.1,2.4)(1,-1){2}{\vector(1,-1){0.9}}

\m(7.95,2.4)(1,-1){2}{\vector(1,-1){0.9}}
\m(8.95,3.4)(1,-1){1}{\vector(1,-1){0.9}}


\m(4.1,0.6)(1,1){2}{\vector(1,1){0.9}}
\m(2.1,0.6)(1,1){3}{\vector(1,1){0.9}}
\m(2.1,2.6)(1,1){1}{\vector(1,1){0.9}}
\m(7.95,0.6)(1,1){2}{\vector(1,1){0.9}}
\m(7.95,2.6)(1,1){1}{\vector(1,1){0.9}}

\m(2,0.5)(0,1){5}{\line(0,1){0.5}} \m(9.9,0.5)(0,1){5}{\line(0,1){0.5}}

\m(3,4.8)(0,-0.5){3}{$\cdot$} \m(5,4.8)(0,-0.5){3}{$\cdot$} \m(8.85,4.8)(0,-0.5){3}{$\cdot$}
\m(6.4,0.4)(0.5,0){3}{$\cdot$} \m(6.4,2.4)(0.5,0){3}{$\cdot$}  \m(6.4,1.4)(0.5,0){3}{$\cdot$}

\put(2.1,0.4){\tiny{$(1,1)=T_{n-1}$}} \put(4.1,0.4){\tiny{$(2,1)$}} \put(6.1,0.2){\tiny{$(3,1)$}}
\put(3.1,1.4){\tiny{$(1,2)=T_{n-2}$}} \put(5.1,1.4){\tiny{$(2,2)$}}
\put(2.1,2.4){\tiny{($n$,3)}} \put(4.1,2.4){\tiny{(1,3)$=T_{n-3}$}} \put(6.1,2.2){\tiny{(2,3)}}

\put(7.95,0.4){\tiny{$(n,1)$}} \put(9.95,0.4){\tiny{$(1,1)$}} \put(8.95,1.4){\tiny{$(n,2)$}}
\put(7.95,2.4){\tiny{$(n-1,3)$}} \put(9.95,2.4){\tiny{$(n,3)$}}

\m(3.05,3.5)(2,0){2}{\circle{0.1}}
\m(3.05,1.5)(2,0){2}{\circle{0.1}}
\m(2.05,2.5)(2,0){3}{\circle{0.1}}
\m(2.05,0.5)(2,0){3}{\circle{0.1}}

\m(7.9,0.5)(2,0){2}{\circle{0.1}}
\m(8.9,1.5)(2,0){1}{\circle{0.1}}
\m(7.9,2.5)(2,0){2}{\circle{0.1}}
\m(8.9,3.5)(2,0){1}{\circle{0.1}}

\end{picture}
\caption{The tube of rank $n$.}
\label{Fig:tube}
\end{figure}
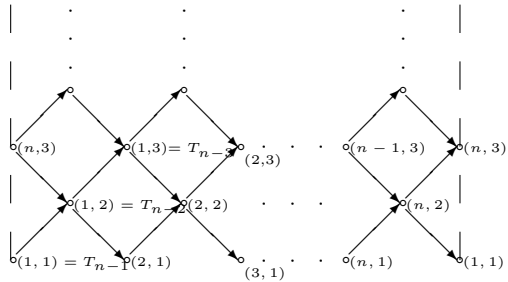

The cluster tube of rank $n$ is defined in \cite{BKL}, as
the orbit category \[\C_n := D^b(\T_n)/\tau ^{-1}[1],\] where $[1]$ is
the shift functor of $D^b(\T _n)$. This category is a $2-$Calabi-Yau triangulated
category such that the projection $\pi: D^b(\T _n)\rightarrow \C_n$
is a triangle functor \cite{BKL,Ke1}. The cluster tube $\C _n$ has Auslander-Reiten triangles induced from the ones in
$D^b(\T_n)$. It is easy to see that indecomposable objects in $\T_n$
are also indecomposable in $\C_n$ (via the composition of the
inclusion functor $\T_n\hookrightarrow D^b(\T_n)$ with the projection
$\pi: D^b(\T _n)\rightarrow \C_n$) and all indecomposable objects in
$\C_n$ are of this form. So we use the same $(a,b)$ to denote the
indecomposable object in $\C_n$ induced from the object $(a,b)$ in $\T_n$.

\medskip

By the definition of $\C_n$, for two objects $X,Y\in\T_n$,
\[\Hom_{\C_n}(X,Y)\cong\Hom_{\T_n}(X,Y)\oplus D\Hom_{\T_n}(Y,\tau^2X).\]
As in \cite{BMV}, the maps from $\Hom_{\T_n}(X,Y)$ are called $\T-$maps and the maps from
$D\Hom_{\T_n}(Y,\tau^2X)$ are called $\D-$maps. Any map from
X to Y in $\C_n$ can be written as the sum of a $\T-$map and a $\D-$map. One knows that the composition of two $\T-$maps is
also a $\T-$map, the composition of a $\T-$map and a $\D-$ map is a $\D-$map, and the composition
of two $\D-$maps is zero.

\medskip

The indecomposable rigid objects are classified in \cite{BMV}:

\begin{prop}
 $(a,b)$ is rigid if and only if $b\leq n-1$.
\end{prop}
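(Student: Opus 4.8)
The plan is to work directly in the tube $\T_n$, where the combinatorics of indecomposables is completely explicit. Recall that $(a,b)$ has socle $(a,1)$ and quasi-length $b$, that $\tau(a,b)=(a-1,b)$, and that in $\C_n$ one has the decomposition $\Hom_{\C_n}(X,Y)\cong\Hom_{\T_n}(X,Y)\oplus D\Hom_{\T_n}(Y,\tau^2 X)$. Since $\Ext^1_{\C_n}(X,Y)\cong D\Hom_{\C_n}(Y,X[1])$ by the $2$-CY property, and $X[1]$ in the orbit category identifies with $\tau X$ up to the relevant shift, the computation of $\Ext^1_{\C_n}((a,b),(a,b))$ reduces to understanding $\Hom_{\T_n}$ and $\Ext^1_{\T_n}$ between an indecomposable and its AR-translates. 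First I would unwind the orbit-category formula to get a concrete description: $\Ext^1_{\C_n}((a,b),(a,b))$ is built from $\Ext^1_{\T_n}((a,b),(a,b))$ together with a $\D$-part coming from $\Hom_{\T_n}((a,b),\tau^2(a,b))=\Hom_{\T_n}((a,b),(a-2,b))$ (or its dual), so the whole question becomes a statement about $\Hom$-spaces of uniserial modules over the cyclic quiver.

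Next I would use the uniserial structure: for indecomposables $(a,b)$ and $(c,d)$ in $\T_n$, $\Hom_{\T_n}((a,b),(c,d))$ is nonzero precisely when the composition-series data match up appropriately — concretely, a nonzero map exists iff the top of $(c,d)$ sits inside $(a,b)$ as a factor with compatible socle, which happens iff $c$ lies in a certain arc of length determined by $b$ and $d$ modulo $n$. Writing this out, $\Hom_{\T_n}((a,b),(a-2,b))\neq 0$ and $\Ext^1_{\T_n}((a,b),(a,b))\neq 0$ exactly when $b$ is large enough that the "wrap-around" on the cycle of length $n$ forces an overlap; a short count shows the threshold is $b\geq n$, i.e. both vanish when $b\leq n-1$ and at least one is nonzero when $b\geq n$. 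Combining the pieces through the orbit-category formula then gives $\Ext^1_{\C_n}((a,b),(a,b))=0 \iff b\leq n-1$, which is the claim. (Alternatively, one can cite the known fact that the rigid indecomposables in a tube of rank $n$ are exactly those of quasi-length $<n$, which is classical for tubes; but I would prefer to give the self-contained count since the numerics $b\le n-1$ are what drive the rest of the paper.)

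The main obstacle is bookkeeping the modular arithmetic on the cyclic quiver correctly: one must be careful about the distinction between quasi-length and actual length, about which AR-translate ($\tau^2$ versus $\tau^{-2}$) appears after passing to the orbit category, and about the boundary case $b=n-1$ versus $b=n$ where the arc of possible socles just closes up into the whole cycle. I would isolate this as a small lemma computing $\dim_k\Hom_{\T_n}((a,b),(c,d))$ in terms of the overlap of two arcs on $\mathbf{Z}/n$, prove it by induction on $d$ using the AR-sequence $0\to(c,d-1)\to(c,d)\oplus(c+1,d-2)\to(c+1,d-1)\to 0$, and then just read off the two relevant special cases. Once that lemma is in place the proposition is immediate in both directions.
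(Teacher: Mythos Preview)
The paper does not give a proof of this proposition at all: it is stated as a quotation from [BMV] (``The indecomposable rigid objects are classified in [BMV]''), so there is no argument in the paper to compare your proposal against. What you have written is therefore not a reconstruction of the authors' reasoning but an independent, self-contained proof sketch.

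That said, your approach is essentially correct and is the standard way to verify this fact by hand. A small point of bookkeeping: once you use that $[1]=\tau$ in $\C_n$, the orbit-category Hom-formula gives
\[
\Ext^1_{\C_n}\big((a,b),(a,b)\big)\;\cong\;\Hom_{\T_n}\big((a,b),(a-1,b)\big)\;\oplus\;D\Hom_{\T_n}\big((a-1,b),(a-2,b)\big),
\]
and since $\tau$ is an autoequivalence the two summands have the same dimension. So rigidity in $\C_n$ reduces exactly to $\Hom_{\T_n}\big((a,b),\tau(a,b)\big)=0$, which by AR-duality is the same as $\Ext^1_{\T_n}\big((a,b),(a,b)\big)=0$. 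Your identification of the $\D$-part as ``$\Hom_{\T_n}((a,b),\tau^2(a,b))$ or its dual'' is slightly off as written, but the ``or its dual'' hedge and $\tau$-invariance make the conclusion unaffected. The uniserial count you outline (quotients of $(a,b)$ versus submodules of $(a-1,b)$) then gives the threshold $b\le n-1$ on the nose, with the boundary case $b=n-1$ handled because the only candidate image has length zero.

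In short: your proposal is correct and more detailed than what the paper supplies; the paper simply defers to [BMV]. If you want to match the paper literally, a one-line citation suffices; if you want a self-contained argument, tightening the decomposition as above and then invoking the classical fact that rigid indecomposables in a rank-$n$ tube are exactly those of quasi-length $<n$ is the cleanest route.
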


Denote by $T_i=(1,n-i)$, $i=1,\cdots, n-1$. It is easy to see that
$\oplus_{i=1}^{n-1}T_i$ is a maximal rigid object in $\C_n$. We will
use $T$ to denote this maximal rigid object through the paper. Let $\D=\add T[-1]*\add T$.
Following \cite{V,Y}, the set of indecomposable objects in $\D[1]$ is
the set of indecomposable objects $(a, b)$ satisfying either (1)
(a,b) is rigid, or (2) $n\leq b\leq 2n-2$ and $a+b\leq 2n-1$.  We
divide $\D$ into five subsets (see Figure \ref{Fig:five}):
 \[\begin{array}{rcl}O&=&\{(a,b)\mid a=1,b\leq n-1\},\\
 I&=&\{(a,b)\mid 2\leq a\leq n-1,a+b\leq n\},\\
 II&=&\{(a,b)\mid a+b\geq n+1, b\leq n-1\},\\
III&=&\{(a,b)\mid a+b\leq 2n-1, a\neq1, b\geq n\},\\
IV&=&\{(a,b)\mid a+b=2n, a\neq1, b\geq n\}.\end{array}\]

\begin{figure}[h]
\centering
\setlength{\unitlength}{0.75cm}
\begin{picture}(18,7.5)

\put(2.3,0.4){\tiny{b=1}} \put(2.3,3.4){\tiny{b=n-1}}
\put(2.3,3.7){\tiny{b=n}}

\put(3,0.5){\line(1,0){0.45}} \put(3.55,0.5){\line(1,0){0.5}}
\put(4.15,0.5){\line(1,0){5.3}} \put(9.55,0.5){\line(1,0){0.5}}
\put(10.15,0.5){\line(1,0){5.35}}

\put(3,3.5){\line(1,0){3.45}} \put(6.55,3.5){\line(1,0){0.5}}
\put(7.15,3.5){\line(1,0){5.9}} \put(13.15,3.5){\line(1,0){2.35}}

\put(3,3.8){\line(1,0){4.35}} \put(7.45,3.8){\line(1,0){5.3}}
\put(12.85,3.8){\line(1,0){0.5}} \put(13.45,3.8){\line(1,0){2.05}}

\put(3.5,0.5){\circle{0.1}} \put(6.5,3.5){\circle{0.1}}
\put(6.8,3.2){\circle{0.1}} \put(4.1,0.5){\circle{0.1}}
\put(7.1,3.5){\circle{0.1}} \put(10.1,0.5){\circle{0.1}}
\put(13.1,3.5){\circle{0.1}} \put(9.5,0.5){\circle{0.1}}

\put(3.55,0.55){\line(1,1){2.9}} \put(6.85,3.15){\line(1,-1){2.6}}
\put(4.15,0.55){\line(1,1){2.6}} \put(7.15,3.45){\line(1,-1){2.9}}
\put(10.15,0.55){\line(1,1){2.9}}

\put(7.4,3.8){\circle{0.1}} \put(10.1,6.5){\circle{0.1}}
\put(12.8,3.8){\circle{0.1}} \put(10.4,6.8){\circle{0.1}}
\put(13.4,3.8){\circle{0.1}}

\put(7.45,3.85){\line(1,1){2.6}} \put(10.15,6.45){\line(1,-1){2.6}}
\put(10.45,6.75){\line(1,-1){2.9}}

\put(3.1,0.25){\tiny{$(1,1)$}} \put(3.8,0.25){\tiny{$(2,1)=T_{n-1}[1]$}}
\put(5.2,3.25){\tiny{$(1,n-1)$}} \put(6.3,2.6){\tiny{$(2,n-2)=T_2[-1]$}}
\put(7.3,3.25){\tiny{$(2,n-1)=T_1[-1]$}} \put(7.5,3.85){\tiny{$(2,n)$}}
\put(8.9,6.45){\tiny{$(2,2n-3)$}} \put(10.45,6.75){\tiny{$(2,2n-2)$}}
\put(11.6,3.85){\tiny{$(n-1,n)$}} \put(13,3.25){\tiny{$(n,n-1)$}}
\put(13.3,3.85){\tiny{$(n,n)$}} \put(8.8,0.25){\tiny{$(n-1,1)$}}
\put(9.9,0.25){\tiny{$(n,1)$}}

\put(5,2){O} \put(6.7,1.5){I} \put(9.9,2.1){II} \put(9.8,4.7){III}
\put(11.5,5.3){IV}

\end{picture}
\caption{Five disjoint subsets of $\D$.}
\label{Fig:five}
\end{figure}

Let $f_1^{(a,b)}$ be a nonzero $\T-$map from $T_1[-1]$ to $(a,b)\in II\cup III\cup IV$ and let $g_1^1$ be a
nonzero $\D-$map from $T_1[-1]$ to itself. Write \[g_1^{(a,b)}=f_1^{(a,b)}g_1^1.\] The triangle involving $g_1^1$ is
\[(3,2n-2)\rightarrow T_1[-1]\s{g_1^1}\rightarrow T_1[-1]\s{f}\rightarrow (2,2n-2)\] where $f=\lambda f^{(2,2n-1)}_1$ for some nonzero $\lambda\in k$. We know that $\alpha f=0$ for any $\T-$map $\alpha$ in
$\Hom_{\C_n}((2,2n-2),(a,b))$ for $(a,b)\in II\cup III$, so $f_1^{(a,b)}$ can not factor through $f$,
and then $g_1^{(a,b)}=f_1^{(a,b)}g_1^1\neq0$.
Computing the dimension of $\Hom_{\C_n}(T_1[-1],(a,b))$, we have the following obvious fact.

\begin{lem}\label{Lem:dimt1} For any indecomposable object $(a,b)$ in $\D$, we have that
\[
\dim\Hom_{\C_n}(T_1[-1],(a,b))=\begin{cases}
0 & \text{if $(a,b)\in O$ or $I$};\\
2 & \text{if $(a,b)\in II$ or $III$;}\\
1 & \text{if $(a,b)\in IV$}.
\end{cases}
\]
Moreover, $f_1^{(a,b)},g_1^{(a,b)}$ form a basis of $\Hom_{\C_n}(T_1[-1],(a,b))$ for $(a,b)\in II$ or $III$;
$f_1^{(a,b)}$ forms a basis of $\Hom_{\C_n}(T_1[-1],(a,b))$ for $(a,b)\in IV$.
\end{lem}

Let $t_i^{i-1}$ be a nonzero $\T-$map from $T_i[-1]$ to $T_{i-1}[-1]$ and $\id_i$ be the identity from $T_i[-1]$ to itself. We write
\[f_i^{(a,b)}=f^{(a,b)}_k\id_kt_{k+1}^k\cdots t_{i-1}^{i-2}t_i^{i-1},\]
and \[g_i^{(a,b)}=g_1^{(a,b)}f_i^1,\]
where $k=1$ if $(a,b)\in II$; $k=n-a-b+2$ and $f_k^{(a,b)}$ is a nonzero $\T-$map
from $T_k[-1]$ to $(a,b)$ if $(a,b)\in I$. Then we know that $f^{(a,b)}_i$ (resp. $g_i^{(a,b)}$) is a nonzero $\T-$map (resp. $\D-$map)
if there exist nonzero $\T-$maps (resp. $\D-$maps) from $T_i[-1]$ to $(a,b)$, since any $\D-$map factors through the ray starting $T_i$ (Lemma~2.2 in \cite{BMV}).
By our setting, we know that $f^{(a,b)}_i\neq f^{(a,b)}_{i'}$ if $i\neq i'$. Computing the dimension of $\Hom_{\C_n}(T_i[-1],(a,b))$,
we have the following obvious fact.

\begin{lem}\label{Lem:dimti} For any indecomposable rigid object $(a,b)$, i.e. $(a,b)\in O\cup I\cup II$, we have that
\[
\dim \Hom_{\C_n}(T_i[-1],(a,b))=\begin{cases}
2& \text{if $1\leq i\leq n-a+1$, $(a,b)\in II$;}\\
1&  \begin{matrix}\text{if $n-a+2\leq i\leq 2n-a-b$, $(a,b)\in II$,\ \ \ \ \ } \\ \text{or $n-a-b+2\leq i\leq n-a+1$, $(a,b)\in I$;}\end{matrix}\\
0& \text{otherwise.}
\end{cases}
\]
Moreover, a basis of $\Hom_{\C_n}(T_i[-1],(a,b))$ is $\{f^{(a,b)}_i, g^{(a,b)}_i\}$ for $1\leq i\leq n-a+1$, $(a,b)\in II$ ;
$\{g^{(a,b)}_i\}$ for $n-a+2\leq i\leq 2n-a-b$, $(a,b)\in II$; and $\{f^{(a,b)}_i\}$ for $n-a-b+2\leq i\leq n-a+1$, $(a,b)\in I$ respectively.
\end{lem}

\section{Index and the cluster map}\label{Sec:cha}

We use the same notations as the above section. In this section, we will
define the cluster map $X_{?}$ from the set of indecomposable rigid
objects in $\C_n$ to $\mathcal{F}=\mathbb{Q}(x_1,x_2,\cdots, x_n)$ by using the geometric
information of the indecomposable rigid objects. We will give an
explicit expression of $X_M$ as a Laurent polynomial of $x_1,
\cdots, x_{n-1}$ according to which subset $M$ belongs to (recall that the
set of indecomposable rigid objects in $\C_n$ is divided into three
disjoint subsets $O$, $I$ and $II$, see Figure \ref{Fig:five}).
\medskip

Let $K_0^{split}(T)$ be
 the split-Grothendieck group of add$T$, i.e. the free abelian group
 with a basis consisting of isomorphism classes $[T_1],\cdots,[T_{n-1}]$ of indecomposable
 direct summands of $T$.
\medskip

For an object $X$ of $\D[1]=\add T*\add T[1]$, there
exists a traingle
\[
T''_X\rightarrow T'_X\s{f}\rightarrow X\rightarrow T''_X[1]
\]
where $T''_X,T'_X\in\add T$. It follows that $f$ is a right
add$T-$approximation. We define the index
\[ind_T(X)=[T'_X]-[T''_X]\in K_0^{split}(T)\] as in \cite{Pa1,DK,Pla1,Pla2}. It
was proved in \cite{ZZ} that any rigid object belongs to $\D[1]$ (also in
$\D$). The next lemma tells us how to get the right
add$T-$approximation of any indecomposable rigid object in $\C_n$.

\begin{lem}
For any indecomposable rigid object $(a,b)$ in $\T_n$, every right $\add T-$approximation of $(a,b)$ in $\T_n$ is a right $\add T-$approximation of $(a,b)$ in $\C_n$.
\end{lem}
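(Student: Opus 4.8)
The statement claims that for an indecomposable rigid object $(a,b)$ in $\T_n$, any $\add T$-approximation of $(a,b)$ computed inside the abelian category $\T_n$ is still an $\add T$-approximation inside $\C_n$. The subtlety is that $\Hom_{\C_n}(T_i,(a,b))$ is larger than $\Hom_{\T_n}(T_i,(a,b))$ — by the formula $\Hom_{\C_n}(X,Y)\cong\Hom_{\T_n}(X,Y)\coprod D\Hom_{\T_n}(Y,\tau^2 X)$ there is an extra $\D$-part $D\Hom_{\T_n}((a,b),\tau^2 T_i)$ — so a priori the $\T$-approximation might fail to be surjective on these extra morphisms. The plan is to show the $\D$-part contributes nothing: I would prove that every $\D$-map $T_i\to (a,b)$ factors through the given $\T$-approximation, which reduces to showing such $\D$-maps already factor through $\T$-maps $T_j\to(a,b)$ (equivalently, through the identity composed with $\T$-maps out of $\add T$), so that adding them to the approximation problem changes nothing.

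First I would write $T_i=(1,n-i)$ explicitly and analyze $\Hom_{\T_n}((a,b),\tau^2 T_i)=\Hom_{\T_n}((a,b),(n-1,n-i))$ using the uniserial structure of $\T_n$: a nonzero map between indecomposables $(c,d)\to(c',d')$ in $\T_n$ exists precisely under the standard combinatorial condition on socles and quasi-lengths (the image is a uniserial quotient of the source and a uniserial sub of the target). Since $b\le n-1$ (rigidity, by the preceding Proposition) and each $T_j$ has quasi-length $n-j\le n-1$, I would show that whenever there is a nonzero $\D$-map $T_i\to(a,b)$, the corresponding socle/length inequalities force the existence of an index $j$ and a nonzero $\T$-map $T_j\to(a,b)$ through which it factors; concretely the $\D$-map is (up to the duality identification) dual to a $\T$-map $(a,b)\to\tau^2 T_i$, and composing with the AR-structure or with suitable $\T$-maps among the $T_j$'s realizes the factorization. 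Thus any morphism $T_i\to(a,b)$ in $\C_n$ — whether $\T$- or $\D$-type — lies in the image of $\bigoplus_j\Hom_{\C_n}(T_j,(a,b))$ applied after a $\T$-approximation, so a map $f\colon T'\to(a,b)$ that is a right $\add T$-approximation in $\T_n$ remains right $\add T$-approximation in $\C_n$.

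Concretely the steps are: (1) recall the explicit morphism spaces between indecomposables of $\T_n$ and the shape of $\tau$; (2) for fixed $(a,b)$ with $b\le n-1$, identify which $T_i$ admit nonzero $\T$-maps and which admit nonzero $\D$-maps to $(a,b)$; (3) show every such $\D$-map factors through a $\T$-map from some $T_j$; (4) conclude that surjectivity of $\Hom_{\C_n}(-,(a,b))$ restricted to $\add T$ is already witnessed by the $\T$-approximation. The main obstacle is step (3): controlling the $\D$-maps, since the duality $D\Hom_{\T_n}((a,b),\tau^2 T_i)$ reverses direction and one must carefully track how a dual functional on a Hom-space in $\T_n$ is realized as an actual composite of honest morphisms in $\C_n$; here I expect to use that $(a,b)$ is rigid (so $b\le n-1$) together with the specific form $T_i=(1,\ast)$, which pins down $\tau^2 T_i=(n-1,\ast)$ and makes the relevant Hom-spaces at most one-dimensional, allowing an explicit factorization rather than an abstract argument.
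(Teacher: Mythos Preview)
Your overall strategy coincides with the paper's: both aim to show that every $\D$-map $T_i\rightarrow(a,b)$ already factors through the $\add T$-approximation computed in $\T_n$. However, your step~(3) is the entire content of the lemma, and you have not identified a workable mechanism for it. Saying the factorization will come from ``composing with the AR-structure or with suitable $\T$-maps among the $T_j$'s'' is not yet a method: composing a $\D$-map with $\T$-maps between the $T_j$'s only yields another $\D$-map, and unwinding the duality $D\Hom_{\T_n}((a,b),\tau^2T_i)$ directly does not by itself exhibit a factorization through an honest $\T$-map into $(a,b)$. Knowing that the relevant Hom-spaces are at most one-dimensional is necessary but not sufficient; you still need to produce a specific $\D$-map $T_i\rightarrow T_j$ whose composite with a $\T$-map $T_j\rightarrow(a,b)$ is nonzero.

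The paper's concrete device, absent from your plan, is the loop at $T_1$. One first reduces to $i=1$: every $\D$-map $T_i\rightarrow(a,b)$ factors through a $\D$-map $T_1\rightarrow(a,b)$ (this is Lemma~2.2 of [BMV]). The $\D$-part of $\Hom_{\C_n}(T_1,(a,b))$ has dimension $\dim\Hom_{\T_n}((a,b),\tau^2T_1)\le 1$; when it is nonzero one checks that $\Hom_{\T_n}(T_1,(a,b))$ is nonzero as well, so fix a nonzero $\T$-map $h\colon T_1\rightarrow(a,b)$. The crucial point is that $\End_{\C_n}(T_1)$ contains a nonzero $\D$-map $f\colon T_1\rightarrow T_1$ (the loop), and $hf$ is then a $\D$-map $T_1\rightarrow(a,b)$. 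The paper shows $hf\neq 0$ via the triangle $(2,2n-2)\rightarrow T_1\s{f}\rightarrow T_1\s{g}\rightarrow(1,2n-2)$: since $\alpha g=0$ for every $\T$-map $\alpha\colon(1,2n-2)\rightarrow(a,b)$, the $\T$-map $h$ cannot factor through $g$, whence $hf\neq 0$. As the $\D$-part of $\Hom_{\C_n}(T_1,(a,b))$ is one-dimensional, $hf$ spans it; hence every $\D$-map $T_1\rightarrow(a,b)$ factors through the $\T$-map $h$, which in turn factors through any $\add T$-approximation taken in $\T_n$. This loop argument is the concrete ingredient you need in order to carry out~(3).
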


\begin{proof}
If there are no non-zero $\D-$maps in
$\Hom_{\C_n}(T,(a,b)),$ then a right $\add T-$approximation of $(a,b)$
in $\T_n$ is a right $\add T-$approximation of $(a,b)$ in $\C_n$. Now
suppose there are non-zero $\D-$maps in $\Hom_{\C_n}(T,(a,b)),$
then by Lemma \ref{Lem:dimt1} and Lemma \ref{Lem:dimti}, anyone of them is a sum of some maps $fg_1^1f_i^1$
where $f$ is a $\T-$map from $T_i$ to $(a,b)$. Therefore,
every $\D-$map in $\Hom_{\C_n}(T,(a,b))$ factors through $\T-$maps from $T$ to $(a,b)$.
Thus we have the assertion.
\end{proof}

We calculate the index of any indecomposable rigid object $(a,b)$ respect to $T$.

\begin{lem}\label{Lem:index}
For any indecomposable rigid object $(a,b)$ in $\C_n$,
\[\ind_T(a,b)=\begin{cases}
[T_1]-[T_{n-a+1}]-[T_{2n-a-b}]& \text{if $a+b\geq n+1$}\\
[T_{n-a-b+1}]-[T_{n-a+1}]& \text{if $a+b\leq n$}.
\end{cases}\]
\end{lem}

\begin{proof}
For $a+b\geq n+1$, there is a minimal right
$\add T-$approximation $f:T_1\rightarrow (a,b)$ in $\T_n$. Then
$\ker(f)=(1,a-1)$ and $\coker(\tau^{-1}f)=(1,a+b-n)$. We get a
triangle
\[
C\rightarrow T_1\s{f}\rightarrow (a,b)\rightarrow C[1]
\]
in $\C_n$ and an exact sequence
\[
0\rightarrow\coker(\tau^{-1}f)\rightarrow C\rightarrow\ker(f)\rightarrow0
\]
in $\T_n$ (cf. \cite{Y}). Since $\Ext_{\T_n}^1((1,a-1),(1,a+b-n))=0$,
then $C\cong(1,a-1)\oplus(1,a+b-n)$. Hence
$\ind_T(a,b)=[T_1]-[T_{n-a+1}]-[T_{2n-a-b}]$.

For $a+b\leq n$, there is a minimal right $\add T-$approximation
$f:T_{n-a-b+1}\rightarrow (a,b)$ in $\T_n$. Then $\ker(f)=(1,a-1)$
and $\coker(\tau^{-1}f)=0$. Hence
$\ind_T(a,b)=[T_{n-a-b+1}]-[T_{n-a+1}]$.
\end{proof}

Let $B=\End_{\C_n}(T[-1])$, then
$F:=\Hom_{\C_n}(T[-1],-)$: $\D/\add T$ $\rightarrow$ $\text{mod}B$ is
an equivalence of abelian categories \cite{IY,Y,ZZ},
where mod$B$ denotes the category of finite dimensional right $B-$modules.
The maps $\id_i$, $1\leq i\leq n-1$, $t_i^{i-1}$, $2\leq i\leq n-1$, and
$g_1^1$ form a set of generators of $B$.

\begin{defn}\label{Defn:cc}
For any indecomposable rigid object $M$ in $\C_n$, we define
\[
X_M=x^{\ind_TM}\sum_{e\in\mathbb{N}^{n-1}}\chi\left(\Gr_e(FM)\right)x^{-\iota(e)}
\]
where the sum takes over all dimension vectors $e$ such that there
exists an object $Y$ in $\D\cap\D[1]$ with
$e=\underline{\dim}FY$; where
$\iota(e)=\ind_TY+\ind_TY[1]$ and
$\chi\left(\Gr_eFM\right)$ is the Euler characteristic
of the quiver Grassmannian of dimension vector $e$ of $FM$ (see \cite{Ke2});
where $x^{\sum_{i=1}^{n-1}a_i[T_i]}=\prod_{i=1}^{n-1}x_i^{a_i}$.

The definition of $X_M$ can be extended to rigid objects: for any
rigid object $N=\bigoplus_{i=1}^mM_i$ in $\C_n$ with $M_i$
indecomposable, we define
\[
X_N=\prod_{i=1}^mX_{M_i}.
\]
\end{defn}

In \cite{Pla1,Pla2}, a similar definition is given respect to a fixed
rigid object $T$ in a $2-$Calabi-Yau triangulated
category with infinite Hom-spaces. The definition there needs an additional
assumption that any
finite-dimensional $B-$module can be lifted through $F$ to an object
in $\D\cap\D[1]$. This assumption is not satisfied in our situation.
For example, the simple $B-$module $S_1$ corresponding to $T_1$ can not
lift to any object in $\D\cap\D[1]$ by the functor $F$
(see the dimension formula in Lemma \ref{Lem:dimt1}). So the definition in
\cite{Pla1,Pla2} can not apply to our case. In our definition, we omit the
$B-$submodules which can not be lifted through $F$ to $\D\cap\D[1]$.

\medskip

The following lemma points out that for an indecomposable rigid
object $M$, if a $B-$submodule of $FM$ can not be lifted to $\D\cap\D[1]$,
then other $B-$submodules of $FM$ with the same dimension can not be lifted to $\D\cap\D[1]$ neither.
So the Euler characteristic of $\Gr_e(FM)$ is well-defined.

\begin{lem}
Let $M$ be an indecomposable rigid object in $\C_n$ and $Y$ be an
object in $\D$ such that $FY$ is a $B-$submodule of $FM$. Then
$Y\in\D\cap\D[1]$ if and only if $\dim\Hom_{\C_n}(T_1[-1],Y)\neq1$.
\end{lem}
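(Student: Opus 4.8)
The plan is to prove the biconditional in Lemma 3.5 by analyzing the two structures that define $\D\cap\D[1]$: membership in $\D=\add T[-1]*\add T$ and membership in $\D[1]$. Since $Y$ is already assumed to lie in $\D$, the real content is to show that $Y\in\D[1]$ if and only if $\dim\Hom(T_1[-1],Y)\neq 1$, under the hypothesis that $FY$ is a $B$-submodule of $FM$ for an indecomposable rigid $M$. First I would reduce to the indecomposable case: both $\D$ and $\D[1]$ are closed under direct summands (they are defined by extension-closure conditions that split over indecomposables here), and the dimension condition $\dim\Hom(T_1[-1],Y)\neq 1$ is not literally additive, so this reduction needs a small argument — but note from the explicit table just before the lemma that $\dim\Hom(T_1[-1],(a,b))\in\{0,1,2\}$ for indecomposable $(a,b)$, and a sum of such dimensions equals $1$ only when exactly one summand contributes $1$ (lands in $IV$) and all others contribute $0$ (land in $O$ or $I$). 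So one should check that if any indecomposable summand of $Y$ lies in $IV$, it already fails to be in $\D[1]$; this handles the reduction.

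**Next I would** invoke the explicit description of $\D[1]$ quoted from [V][Y] right before the five-subset decomposition: the indecomposable objects of $\D[1]$ are exactly the rigid $(a,b)$ (i.e. $b\le n-1$) together with those satisfying $n\le b\le 2n-2$ and $a+b\le 2n-1$. Comparing with the five subsets of $\D$: region $O$ has $a=1,b\le n-1$; region $I$ has $a+b\le n$; region $II$ has $b\le n-1$; region $III$ has $a+b\le 2n-1,\ b\ge n$ — all four of these lie in $\D[1]$. Region $IV$ consists of $(a,b)$ with $a+b=2n$, $a\ne 1$, $b\ge n$, and since $a+b=2n>2n-1$ these are precisely the indecomposables of $\D$ that are \emph{not} in $\D[1]$. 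Meanwhile the dimension table shows $\dim\Hom(T_1[-1],(a,b))=1$ exactly on region $IV$. So for indecomposable $Y\in\D$: $Y\in\D[1]$ iff $Y\notin IV$ iff $\dim\Hom(T_1[-1],Y)\neq 1$. This is the heart of the equivalence and is essentially a bookkeeping match between two already-available descriptions.

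**The remaining point** — and the one I expect to be the main obstacle — is to make sure the hypothesis "$FY$ is a $B$-submodule of $FM$ with $M$ indecomposable rigid" is actually used, since the equivalence above seems to hold for all $Y\in\D$. The role of the hypothesis is twofold: it guarantees $Y\in\D$ in the first place (submodules of $FM$ correspond under the equivalence $F:\D/\add T\xrightarrow{\sim}\mod B$ to objects of $\D$, via the relevant triangle/quotient construction), and more subtly it is what Remark 3.3 refers to — that the "liftability" of a submodule to $\D\cap\D[1]$ depends only on its dimension vector. I would verify that for each indecomposable rigid $M$ the possible indecomposable summands of such $Y$, as read off from the shape of $FM$ (using the index computation in Theorem 3.2 and the structure of $B=\End_{\C_n}(T[-1])$), indeed all fall into the five regions so that the table applies; in particular one must confirm that a $B$-submodule landing "partly in region $IV$" genuinely produces a non-$\D[1]$ object, which is where the indecomposability of $M$ and the uniserial structure of the tube keep the situation rigid enough to control. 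Once these compatibility checks are in place, the biconditional follows by combining the $\D[1]$-description with the dimension table, and I would close by noting this immediately justifies restricting the sum in Definition 3.3 to those $e$ with no "$\dim = 1$" contribution, i.e. omitting exactly the non-liftable simple $S_1$.
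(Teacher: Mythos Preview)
Your argument has a genuine gap in the direction ``$\dim\Hom(T_1[-1],Y)\neq 1 \Rightarrow Y\in\D\cap\D[1]$'' for non-indecomposable $Y$. You assert that ``the equivalence above seems to hold for all $Y\in\D$'', but this is false: take $Y$ with one indecomposable summand in region $II$ (contributing $2$) and one in region $IV$ (contributing $1$); then $\dim\Hom(T_1[-1],Y)=3\neq 1$, yet $Y\notin\D[1]$ because of the $IV$-summand. Your reduction paragraph only handles the implication ``$\dim=1\Rightarrow$ some summand in $IV$'', not the converse, and your indecomposable analysis in the second paragraph does not lift to direct sums since the condition $\dim\neq 1$ is not preserved summand-wise.

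The hypothesis that $FY$ is a $B$-submodule of $FM$ with $M$ indecomposable rigid is used much more sharply than you suggest. First, since $M$ is indecomposable rigid, the dimension table gives $\dim\Hom(T_1[-1],M)\le 2$, so $\dim\Hom(T_1[-1],Y)\le 2$ as well; this already kills counterexamples like the one above. Second, and this is the step you are missing entirely, in the case $\dim\Hom(T_1[-1],Y)=2$ one must still rule out $Y$ having, say, two summands in $IV$. The paper does this by observing that $\dim\Hom(T_1[-1],M)=2$ forces $M\in II$, and then (via the approximation triangle in Theorem~3.2) the $\T$-map $T_1[-1]\to M$ is a \emph{generator} of $FM$ as a $B$-module. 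Since $FY$ contains all of $\Hom(T_1[-1],M)$, it contains this generator, hence $FY=FM$ and $Y\cong M\oplus T'$ with $T'\in\add T$; both pieces lie in $\D\cap\D[1]$. Without this cyclicity argument the $\dim=2$ case is not settled, and your discussion of ``compatibility checks'' and ``uniserial structure'' does not supply it.
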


\begin{proof}
For an object $Y$ in $\D$, it is easy to see that $Y\in\D\cap\D[1]$ if and only if all indecompasable summands of
$Y$ are in $O\bigcup I\bigcup II\bigcup III$, see Figure \ref{Fig:five}. Since $\dim\Hom(T_1[-1],(a,b))\leq2$, so $\dim\Hom(T_1[-1],Y)=2$ implies
that $\dim\Hom(T_1[-1],(a,b))=2$. In this case $f_1^{(a,b)}$ is a generator of $FM$ as $B-$module by Lemma \ref{Lem:dimti} and $f_1^{(a,b)}\in FY$.
So we have that $FY\cong FM$. Then $Y\cong M\oplus T'$, where $T'\in \mbox{add}T$. Hence $Y \in\D\cap\D[1]$.
When $\dim\Hom(T_1[-1],Y)=1$, $Y$ has some summand in $IV$ by the dimension formula in Lemma \ref{Lem:dimt1}, then $Y\notin\D\cap\D[1]$.
When $\dim\Hom(T_1[-1],Y)=0$, all summands of $Y$ are in $O$ or $I$, then $Y\in\D\cap\D[1]$. Thus the statement holds.
\end{proof}

We determine the submodules of F(a,b) for any indecomposable rigid object $(a,b)$.
Put \[\X(a,b)=\{(a,k)\mid0\leq k\leq \min\{n-a,b\}\}\subset I\] and \[\Y(a,b)=\{(a+b-n+1,l)\mid 0\leq l\leq 2n-a-b-1\}\subset I.\]
We denote by Sub$(a,b)$ the set of submodules of $F(a,b)$, the first terms of whose dimension vectors are not 1.

\begin{lem}\label{Lem:submod}
Let $(a,b)$ be an indecomposable rigid object object in $\D$. Then
\[
\text{Sub}(a,b)=\begin{cases}
\{FX\mid X\in\X(a,b)\} & \text{if $(a,b)\in I$};\\
\{FX\oplus FY\mid X\in\X(a,b),Y\in\Y(a,b)\}\cup\{(a,b)\} & \text{if $(a,b)\in II$}.
\end{cases}
\]
For every dimension vector $e\neq\dim F(a,b)$, whose the first term is not 1, $\chi(\Gr_eF(a,b))$ is equal to
the number of elements in the set sub$_e(a,b)$ where
\[
\text{sub}_e(a,b)=\begin{cases}
\{X\in \X(a,b)\mid \dim FX=e\} & \text{if $(a,b)\in I$};\\
\{(X,Y)\in \X(a,b)\times\Y(a,b)\mid \dim(FX\oplus FY)=e\} & \text{if $(a,b)\in II$}.
\end{cases}
\]
\end{lem}

\begin{proof}
\begin{enumerate}[2]
  \item The case $(a,b)\in I$. By Lemma \ref{Lem:dimti}, a basis of $F(a,b)$ is
  \[\{f^{(a,b)}_i\mid n-a-b+2 \leq i \leq n-a+1\}\]
  which satisfy the conditions
  \[
  \begin{array}{rcl}
  f^{(a,b)}_i\id_j&=&\begin{cases}
  f^{(a,b)}_i & \text{if $j=i$,}\\
  0 & \text{if $j\neq i$;}
  \end{cases}\\
  f^{(a,b)}_it_j^{j-1}&=&\begin{cases}
  f^{(a,b)}_{i+1} & \text{if $j=i+1\leq n-a+1$,}\\
  0 & \text{otherwise;}
  \end{cases}\\
  f^{(a,b)}_ig_1^1&=&0.
  \end{array}
  \]
  For every nonzero submodule $S$ of $F(a,b)$, there exists the
  minimal $i_0$, $n-a-b+2 \leq i_0 \leq n-a+2$ such that
  \[f^{(a,b)}_{i_0}+\lambda_1 f^{(a,b)}_{i_0+1}+\cdots+\lambda_{n-a-i_0+1} f^{(a,b)}_{n-a+1}\in S.\]
  By multiplying with $\id_{i_0}$, we have that the first term $f^{(a,b)}_{i_0}$
  is in $S.$ Let $k=n-a-i_0+2,$ then $0\leq k\leq b$ and a basis of $S$ is
  \[\{f^{(a,b)}_i\mid n-a-k+2 \leq i \leq n-a+1\}.\]
  Consider the module $F(a,k)$ whose basis is
  \[\{f^{(a,k)}_i\mid n-a-k+2 \leq i \leq n-a+1\},\]
  which satisfy the conditions
  \[
  \begin{array}{rcl}
  f^{(a,k)}_i\id_{i'}&=&\begin{cases}
  f^{(a,k)}_i & \text{if $i'=i$,}\\
  0 & \text{if $j\neq i$;}
  \end{cases}\\
  f^{(a,k)}_it_{i'}^{i'-1}&=&\begin{cases}
  f^{(a,k)}_{i+1} & \text{if $i'=i+1\leq n-a+1$,}\\
  0 & \text{otherwise;}
  \end{cases}\\
  f^{(a,k)}_ig_1^1&=&\ \ 0.
  \end{array}
  \]
  So there is a natural isomorphism of modules:
  \[\begin{matrix}S&\longrightarrow & F(a,n-a-k+2)\\ f^{(a,b)}_i & \mapsto & f^{(a,k)}_i\end{matrix}.\]
  Clearly, every dimension vector corresponds to one submodule and $\chi(\Gr_eF(a,b))=1$. We finish the proof in this case.

  \item The case $(a,b)\in II$. In this case, by Lemma \ref{Lem:dimti}, there is a basis
  \[\{f^{(a,b)}_i, g_j^{(a,b)}\mid 1\leq i\leq n-a+1, 1\leq j \leq 2n-a-b\}\]
  of $F(a,b)$ and
  \[
  \begin{array}{rcl}
  f^{(a,b)}_i\id_{i'}&=&\begin{cases}
  f^{(a,b)}_i & \text{if $i'=i$,}\\
  0 & \text{if $i'\neq i$;}
  \end{cases}\\
  f^{(a,b)}_it_{i'}^{i'-1}&=&\begin{cases}
  f^{(a,b)}_{i+1} & \text{if $i'=i+1\leq n-a+1$,}\\
  0 & \text{otherwise;}
  \end{cases}\\
  f^{(a,b)}_ig_1^1&=&\begin{cases}
  g_1^{(a,b)} & \text{if $i=1$,}\\
  0 & \text{if $i\neq 1$;}
  \end{cases}\\
  g^{(a,b)}_j\id_{j'}&=&\begin{cases}
  g^{(a,b)}_j & \text{if $j'=j$,}\\
  0 & \text{if $j'\neq j$;}
  \end{cases}\\
  g^{(a,b)}_jt_{j'}^{j'-1}&=&\begin{cases}
  g^{(a,b)}_{j+1} & \text{if $j'=j+1\leq 2n-a-b$,}\\
  0 & \text{otherwise;}
  \end{cases}\\
  g^{(a,b)}_jg_1^1&=&\ \ 0.
  \end{array}
  \]
  For a submodule $S\in$ Sub$(a,b)$, if the first term of dimension vector of $S$ is not 0, then $f_1^{(a,b)}\in S$. Since $f_1^{(a,b)}$ is a generator of $F(a,b)$, $S=F(a,b)$.
  If $f_1^{(a,b)},g_1^{(a,b)}\notin S$, by a similar discussion as above, there is a minimal $i_0$, $2\leq i_0\leq n-a+2$,
  (resp. a minimal $j_0$, $2\leq j_0\leq 2n-a-b+1$) such that $f^{(a,b)}_{i_0}$ is in $S$
  (resp. $\lambda f^{(a,b)}_{j_0}+g^{(a,b)}_{j_0}$ is in $S$ for some $\lambda\in k$).
  Let $k=n-a-i_0+2,$ $l=2n-a-b-j_0+1,$ then $0\leq k\leq n-a,$ $0\leq l\leq 2n-a-b-1,$ and a basis of $S$ is
  \[\{f^{(a,b)}_{i}, \lambda f^{(a,b)}_{j}+g^{(a,b)}_{j}\mid \begin{matrix}n-a-k+2\leq i\leq n-a+1,\\ 2n-a-b-l+1\leq j\leq 2n-a-b\end{matrix}\}.\]
  Consider the module $F(a,k)\oplus F(a+b-n+1,l)$ whose basis is
  \[\{f^{(a,k)}_i, f^{(a+b-n+1,l)}_j\mid \begin{matrix}n-a-k+2 \leq i \leq n-a+1,\\ 2n-a-b-l+1 \leq j \leq 2n-a-b \end{matrix}\},\]
  which satisfy the conditions
  \[
  \begin{array}{rcl}
  f^{(a,k)}_i\id_{i'}&=&\begin{cases}
  f^{(a,k)}_i & \text{\ \ \ \ \ \ \ \ \ if $i'=i$,}\\
  0 & \text{\ \ \ \ \ \ \ \ \ if $i'\neq i$;}
  \end{cases}\\
  f^{(a,k)}_it_{i'}^{i'-1}&=&\begin{cases}
  f^{(a,k)}_{i+1} & \text{\ \ \ \ \ \ \ \ \ if $i'=i+1\leq n-a+1$,}\\
  0 & \text{\ \ \ \ \ \ \ \ \ otherwise;}
  \end{cases}\\
  f^{(a,k)}_ig_1^1&=&\ \ 0;\\
  f^{(a+b-n+1,l)}_j\id_{j'}&=&\begin{cases}
  f^{(a+b-n+1,l)}_j & \text{if $j'=j$,}\\
  0 & \text{if $j'\neq j$;}
  \end{cases}\\
  f^{(a+b-n+1,l)}_jt_{j'}^{j'-1}&=&\begin{cases}
  f^{(a+b-n+1,l)}_{j+1} & \text{if $j'=j+1\leq 2n-a-b$,}\\
  0 & \text{otherwise;}
  \end{cases}\\
  f^{(a+b-n+1,l)}_jg_1^1&=&\ \ 0.
  \end{array}
  \]
  So there is a natural isomorphism of modules:
  \[\begin{matrix}S&\longrightarrow & F(a,n-a-k+2)\\ f^{(a,b)}_i & \mapsto & f^{(a,k)}_i\\
  \lambda f^{(a,b)}_{j}+g^{(a,b)}_{j}&\mapsto & f^{(a+b-n+1,l)}_j\end{matrix}.\]
  Using the same notations above, to get the formula of the Euler character, we only need to prove that the Euler character of the Grassmannian associated to the submodule $S$ above with all values of $\lambda$ chosen to be 1, since any $\lambda$ offer the same submodule up to isomorphism and $\chi$ is additive with respect to disjoint unions. If $i_0\geq j_0$, then the associated Grassmannian contains only one point, so its Euler character is 1; if $i_0<j_0$, then the associated Grassmannian is $\mathbb{P}^1\setminus\{\text{a point}\}$, so its Euler character is also 1. Thus we complete the proof.
\end{enumerate}
\end{proof}

Before giving the explicit formula of $X_{(a,b)}$, we need show why $\iota(e)$ does not depend on the choice of $Y$.
Define \[\iota(Y)=\ind_TY+\ind_TY[1].\] Clearly, $\iota(Y_1\oplus Y_2)=\iota(Y_1)+\iota(Y_2)$ and $\iota(T')=0$ for any $T'\in\add T$. If there are two submodules $FY\cong FY'$, then $Y\oplus T_1\cong Y'\oplus T_2$ with $T_1,T_2\in\add T$. So $\iota(Y)=\iota(Y')$. If there are two non-isomorphic submodules $FY$, $FY'$ with the same dimension, by Lemma \ref{Lem:submod} and the discussion above,
we can write $Y=(a,k)\oplus (a+b-n+1,l)$ and $Y'=(a,k')\oplus (a+b-n+1,l')$. Then \[\dim FY= \sum_{i=n-a-k+2}^{n-a+1}e_i+\sum_{j=2n-a-b-l+1}^{2n-a-b}e_j=\dim FY'=\sum_{i=n-a-k'+2}^{n-a+1}e_i+\sum_{j=2n-a-b-l'+1}^{2n-a-b}e_j,\] where $e_i$ is the dimension vector of the simple module of $B$ corresponding to $T_i$. Note that $k=k'$ implies $l=l'$ and $FY$ is not isomorphic to $FY'$, so $k\neq k'$. Without loss of generality, we assume that $k> k'$. Then $k'=l-n+b+1$, $l'=k+n-b-1$. By Lemma $\ref{Lem:index}$, we have that \[\iota((a,k)\oplus (a+b-n+1,l))=[T_{n-a-k+1}]-[T_{n-a+1}]+[T_{2n-a-b-l}]-[T_{2n-a-b}],\] and
\[\iota((a,k')\oplus (a+b-n+1,l'))=[T_{2n-a-b-l}]-[T_{n-a+1}]+[T_{n-a-k+1}]-[T_{2n-a-b}].\] So $\iota(Y)=\iota(Y')$. Combining with Lemma \ref{Lem:submod}, we have a new form of Definition \ref{Defn:cc}.

\begin{lem}\label{Lem:newform}
For any indecomposable rigid object $(a,b)$ in $\C_n$, we have that
\[
X_{(a,b)}=\begin{cases}
x^{\ind_T(a,b)} & \text{if $(a,b)\in O$};\\
x^{\ind_T(a,b)}\sum\limits_{X\in\X(a,b)}x^{-\iota(X)} & \text{if $(a,b)\in I$};\\
x^{\ind_T(a,b)}\left(x^{-\iota(a,b)}+\sum\limits_{X\in\X(a,b),Y\in\Y(a,b)}x^{-\iota(X\oplus Y)}\right) & \text{if $(a,b)\in II$}.
\end{cases}
\]
\end{lem}

Now we give the explicit expression of $X_{(a,b)}$ for each indecomposable rigid object $(a,b)$. In the following
expression of $x_{m}$, when $m=n$, $x_m$ is taken to be 1. In this setting, the first formula in Lemma \ref{Lem:index}
also holds in the case of $a+b=n$.

\begin{thm}\label{Thm:1}
For any indecomposable rigid object $(a,b)$, we have that
\[
X_{(a,b)}=\begin{cases}
x_{n-b} & \text{if $(a,b)\in O$};\\
\sum\limits_{k=0}^b\frac{x_{n-a-b+1}x_{n-a+2}}{x_{n-a-k+1}x_{n-a-k+2}}& \text{if $(a,b)\in I$;}\\
x_1x_{K+2}x_{L+2}\left(\frac{1}{x_1^2}
+\left(\sum\limits_{k=0}^{K}\frac{1}{x_{K-k+1}x_{K-k+2}}\right)
\left(\sum\limits_{l=0}^{L}\frac{1}{x_{L-l+1}x_{L-l+2}}\right)\right)& \text{if $(a,b)\in II$};
\end{cases}
\]
where $K=n-a$, $L=2n-a-b-1$.
\end{thm}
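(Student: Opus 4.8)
The plan is to prove both parts of the statement — that $X_{(a,b)}$ is well defined and that it equals the displayed Laurent polynomial — by first reducing the sum in Definition 3.3 to a sum over the submodules of $FM:=\Hom_{\C_n}(T[-1],M)$ and then treating the subsets $O$, $I$, $II$ in turn. First I would settle well-definedness, which also disposes of Remarks 3.4 and 3.5. Because $F\colon\D/\add T\to\mathrm{mod}\,B$ is an equivalence, a dimension vector $e$ occurs in the sum exactly when some $B$-module of dimension vector $e$ lifts to $\D\cap\D[1]$; by the preceding lemma this happens iff $\dim\Hom(T_1[-1],Y)\ne1$, i.e. iff the multiplicity of the simple $S_1$ in $FY$ — the coordinate of $e$ at the vertex $1$ — is not $1$, which is a condition on $e$ alone (this is Remark 3.4). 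For such an $e$, if $Y,Y'\in\D\cap\D[1]$ both lift $e$ then $FY\cong FY'$, hence $Y\oplus P\cong Y'\oplus P'$ in $\C_n$ for some $P,P'\in\add T$; since $\ind_T$ is additive on direct sums and $\ind_T(P[1])=-[P]$ (rotate the triangle $P\xrightarrow{1}P\to0\to P[1]$), one gets $\ind_TY+\ind_TY[1]=\ind_T(Y\oplus P)+\ind_T((Y\oplus P)[1])$, and the same computation for $Y'$ gives the same value, so $\iota(e)$ is unambiguous (Remark 3.5). Hence
\[
X_{(a,b)}=x^{\ind_T(a,b)}\sum_{e}\chi\!\left(\Gr_e(FM)\right)x^{-\iota(e)},
\]
the sum running over those $e=\underline{\dim}N$ with $N\subseteq FM$ whose $S_1$-multiplicity is not $1$, and with $x^{\ind_T(a,b)}$ read off from Theorem 3.2.

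Next, case $O$: here $(a,b)=(1,b)$ is a direct summand of the rigid object $T$, so $FM=0$, the sum collapses to its $e=0$ term, and Theorem 3.2 gives $x^{\ind_T(1,b)}=x_{n-b}x_n^{-1}=x_{n-b}$ with the convention $x_n=1$; thus $X_{(1,b)}=x_{n-b}$. For cases $I$ and $II$ I would first describe $FM$ as a $B$-module, using $T_i[-1]\cong(2,n-i)$ in $\C_n$ (as $[1]\cong\tau$ in a $2$-CY category), the decomposition $\Hom_{\C_n}(X,Y)\cong\Hom_{\T_n}(X,Y)\oplus D\Hom_{\T_n}(Y,\tau^2X)$, and the uniseriality of objects of $\T_n$: from these one reads off the dimension vector of $FM$ and its submodule lattice. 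For $(a,b)\in I$ the dimension count preceding the lemma gives $\dim\Hom(T_1[-1],M)=0$, and $FM$ comes out uniserial of length $b$, so its submodules form a chain $0=N_0\subsetneq N_1\subsetneq\cdots\subsetneq N_b=FM$, each $\Gr_{\underline{\dim}N_k}(FM)$ is a single point, and every $N_k$ lifts (its $S_1$-multiplicity is $0$). Choosing an explicit lift $Y_k\in\D\cap\D[1]$ realised inside $\T_n$ and applying Theorem 3.2 to $Y_k$ and to $Y_k[1]$ computes each $\iota(\underline{\dim}N_k)$; substituting into the displayed formula and simplifying yields $\sum_{k=0}^{b}\frac{x_{n-a-b+1}x_{n-a+2}}{x_{n-a-k+1}x_{n-a-k+2}}$, as claimed.

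The case $(a,b)\in II$ is where the real work lies, and I expect it to be the main obstacle. There $\dim\Hom(T_1[-1],M)=2$, so $FM$ carries a two-dimensional space at the vertex $1$ and is no longer uniserial: one must (i) pin down this non-uniserial module structure, which is exactly the feature responsible for $\dim\Ext^1$ reaching $2$; (ii) show that the submodules of $S_1$-multiplicity $\ne1$ — the ones that lift to $\D\cap\D[1]$, by the preceding lemma — consist of a single submodule of $S_1$-multiplicity $2$ together with a rectangular family indexed by $0\le k\le n-a$ and $0\le l\le2n-a-b-1$ of $S_1$-multiplicity $0$, and that each of their quiver Grassmannians has Euler characteristic $1$ (in particular, no stray positive-dimensional components); and (iii) evaluate $\iota(e)=\ind_TY+\ind_TY[1]$ for these lifts, where care is required since $Y[1]$ may have summands lying in a different one of the five subsets, so Theorem 3.2 has to be applied with the correct branch on each indecomposable summand. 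Granting all of this, multiplying $x^{\ind_T(a,b)}=x_1x_{n-a+1}^{-1}x_{2n-a-b}^{-1}$ by the resulting sum, the $S_1$-multiplicity $2$ submodule contributes $x_1x_{n-a+2}x_{2n-a-b+1}\cdot x_1^{-2}$ and the rectangular family contributes $x_1x_{n-a+2}x_{2n-a-b+1}$ times the product of the two sums appearing in the theorem's formula; this final step is a short identity among Laurent monomials, and it is also what shows that the prefactor $x_1x_{n-a+2}x_{2n-a-b+1}$ in the theorem is not $x^{\ind_T(a,b)}$ itself but its product with the common denominator of the sum.
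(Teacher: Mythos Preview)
Your overall strategy matches the paper's, and your treatment of cases $O$ and $I$ is fine, but there are two genuine gaps. First, your well-definedness argument is incorrect: you assert that if $Y,Y'\in\D\cap\D[1]$ both satisfy $\underline{\dim}FY=\underline{\dim}FY'=e$ then $FY\cong FY'$. This fails for the algebra $B$ at hand (which has a loop at the vertex corresponding to $T_1$). Concretely, in case $II$ with $b\le n-2$ and $2\le i\le n-a+1$, the dimension vector $e=\sum_{l=i}^{2n-a-b}e_l$ is realised both by the indecomposable submodule $E_i(g_i)B$ (lifting to $(a+b-n+1,b'')$) and by the decomposable submodule $E_i(f_i)B\oplus E_{n-a+2}(g_{n-a+2})B$ (lifting to $(a,b')\oplus(a+b-n+1,n-b-1)$); these are non-isomorphic $B$-modules with non-isomorphic lifts. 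The paper has to check by a direct index computation via Theorem~3.2 that $\ind_TY+\ind_TY[1]$ nonetheless agrees for these two lifts (and similarly in another subcase). There is no formal shortcut of the kind you propose.

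Second, your picture of case $II$ is wrong in its specifics. The admissible dimension vectors with $S_1$-multiplicity $0$ do \emph{not} form a rectangle of isolated points with $\chi=1$: for several of them $\Gr_e(FM)$ is a $\mathbb{P}^1$ (for instance $e=\sum_{k=i}^{n-a+1}e_k$ with $i\ge2$, where the choice of a line in the two-dimensional space at vertex $i$ is free), so $\chi=2$. The factorisation $\bigl(\sum_{k=0}^{n-a}\cdots\bigr)\bigl(\sum_{l=0}^{2n-a-b-1}\cdots\bigr)$ in the final formula does not come from $(n-a+1)(2n-a-b)$ dimension vectors each contributing one monomial; it emerges only after a case-by-case enumeration (five subcases for $b=n-1$, eight for $b\le n-2$), with several $\chi=2$ contributions and several coincidences among the $\iota(e)$'s that make the total sum factor. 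Your plan, as written, would have to reproduce this enumeration rather than bypass it.
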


\begin{proof}

\begin{enumerate}
\item The case of $(a,b)\in O$. $X_{(a,b)}=x^{\ind_T(a,b)}=x_{n-b}$.

\item The case of $(a,b)\in I$. By Lemma \ref{Lem:newform}, we have that
\[\begin{array}{rcl}X_{(a,b)}&=&x^{\ind_T(a,b)}\sum\limits_{X\in\X(a,b)}x^{-\iota(X)}\\
&=&x^{\ind_T(a,b)}\sum\limits_{k=0}^bx^{-\iota(a,k)}\\
&=&\frac{x_{n-a-b+1}}{x_{n-a+1}}\sum\limits_{k=0}^b\frac{x_{n-a+1}x_{n-a+2}}{x_{n-a-k+1}x_{n-a-k+2}}\\
&=&\sum\limits_{k=0}^b\frac{x_{n-a-b+1}x_{n-a+2}}{x_{n-a-k+1}x_{n-a-k+2}}.\end{array}\]

\item The case of $(a,b)\in II$. By Lemma \ref{Lem:newform}, we have that
\[\begin{array}{rcl}X_{(a,b)}&=&x^{\ind_T(a,b)}\left(x^{-\iota(a,b)}+\sum\limits_{X\in\X(a,b),Y\in\Y(a,b)}x^{-\iota(X\oplus Y)}\right)\\
&=&x^{\ind_T(a,b)}\left(x^{-\iota(a,b)}+\left(\sum\limits_{k=0}^bx^{-\iota(a,k)}\right)\left(\sum\limits_{l=0}^{2n-a-b-l}x^{-\iota(a+b-n+1,l)}\right)\right)\\
&=&x_1x_{K+2}x_{L+2}\left(\frac{1}{x_1^2}
+\left(\sum\limits_{k=0}^{K}\frac{1}{x_{K-k+1}x_{K-k+2}}\right)
\left(\sum\limits_{l=0}^{L}\frac{1}{x_{L-l+1}x_{L-l+2}}\right)\right).\end{array}\]

\end{enumerate}
\end{proof}

\section{Mutation relations}\label{Sec:mu}

By Proposition 2.6 in \cite{BMV}, every maximal rigid object in $\C_n$ is
in some wing of $(a,n-1)$ and there is a natural bijection between
the set of maximal rigid object in the wing of $(a,n-1)$ and the set
of tilting modules over the path algebra $k\vec{A}_{n-1}$ of the linear
quiver of type $A_{n-1}$. Hence using the complete description of
all tilting modules of quivers of type $A$ in \cite{HR}, we can get all
maximal rigid objects by the following induction starting with an chosen object $(a,n-1)$:
for each chosen object $(a,b)$, one choose two objects $(a,h-1)$ and $(a+h,b-h)$ until all the new object are in the bottom of the tube,
see Figure \ref{Fig:mr}; take the direct sum of all chosen objects.

\begin{figure}[h]
\centering
\includegraphics[totalheight=1.4in]{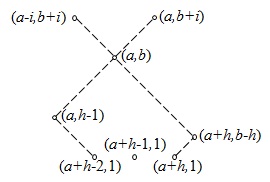}
\caption{The construction of maximal rigid objects.}
\label{Fig:mr}
\end{figure}

Under this construction of maximal rigid objects, we have the following fact.

\begin{lem}\label{Lem:conofmr}
Let $R$ be a maximal rigid object in $\C_n$ and $(a,b)$ be an
indecomposable summand of $R$. Then there are $(a,h-1)$ and
$(a+h,b-h)$ in $\add R$ for some $h$, $1\leq h\leq b$ and every
indecomposable summand of $R$ in the wing of $(a,b)$ which is not
isomorphic to $(a,b)$ is in the wing of $(a,h-1)$ or in the wing of
$(a+h,b-h)$. When $b\neq n-1$, there exists an $i$, $1\leq i\leq
n-b-1$, such that either $(a,b+i)$ or $(a-i,b+i)$ is in $\add R$. See Figure \ref{Fig:mr}.
\end{lem}

By this lemma, we can give all exchange triangles in $\C_n$.

\begin{lem}\label{Lem:mutofmr}
Given two basic maximal rigid objects $T'\oplus\overline{T}$ and
$T''\oplus\overline{T}$ in $\C_n$ such that both $T'$ and $T''$ are
indecomposable. Then $\dim\Ext_{\C_n}^1(T',T'')=1$ or $2$. Moreover, if $\dim\Ext_{\C_n}^1(T',T)=2$, then the exchange
triangles have the following form:
\[
(a,n-1)\rightarrow(a+h,n-h-1)\oplus(a+h,n-h-1)\rightarrow(a+h,n-1)\rightarrow (a,n-1)[1],
\]
\[
(a+h,n-1)\rightarrow(a,h-1)\oplus(a,h-1)\rightarrow(a,n-1)\rightarrow (a+h,n-1)[1],
\]
where $1\leq a\leq n$, $1\leq h\leq n-1$. If $\dim\Ext_{\C_n}^1(T',T)=1$, then the exchange triangles have the
following form:
\[
(a,b)\rightarrow (a,b+i)\oplus(a+h,b-h)\rightarrow(a+h,b-h+i)\rightarrow (a,b)[1],
\]
\[
(a+h,b-h+i)\rightarrow (a+b+1,i-1)\oplus(a,h-1)\rightarrow(a,b)\rightarrow (a+h,b-h+i)[1],
\]
where $1\leq a\leq n$, $b\leq n-2$, $1\leq h\leq b$, $1\leq i\leq
n-b-1$.
\end{lem}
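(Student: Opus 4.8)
The plan is to reduce everything to the combinatorial description of maximal rigid objects supplied by Proposition 4.1 together with the structural facts about mutation recalled in Section~2. First I would fix a basic maximal rigid object $T'\oplus\overline{T}$ with $T'$ indecomposable, and invoke the general $2$-CY theory (the paragraph preceding Remark~2.3, due to [IY, BIRS]): the object $T''=(T')^{*}$ obtained by mutation at $T'$ is indecomposable and $\overline{T}\oplus T''$ is again maximal rigid, and the two exchange triangles $T''\to E\to T'\to T''[1]$, $T'\to E'\to T''\to T'[1]$ are given by minimal right/left $\overline{T}$-approximations. Since these triangles are non-split and $T'$ is rigid, $\dim\Ext^1_{\C_n}(T',T'')\geq 1$; I must show it is $\leq 2$. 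The key input here is that $\dim\Hom(T_1[-1],(a,b))\le 2$ for every indecomposable object $(a,b)$ in $\D$ (the dimension formula stated just before Lemma~3.5), together with the classification of indecomposable rigid objects (Proposition~2.5, $b\leq n-1$) and Lemma~2.2 of [BMV] (every $\D$-map factors through a $\D$-map into $T_1$). From these, the space of $\D$-maps between two indecomposable rigid objects has dimension at most $1$, while the $\T$-maps contribute at most $1$ to $\Ext^1$ in the cluster tube via the orbit-category formula $\Hom_{\C_n}(X,Y)\cong\Hom_{\T_n}(X,Y)\amalg D\Hom_{\T_n}(Y,\tau^2 X)$; carefully bounding each piece gives $\dim\Ext^1_{\C_n}(T',T'')\le 2$.

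Next I would separate the two cases. For the case $\dim\Ext^1(T',T'')=2$: I claim this forces $T'=(a,n-1)$ for some $a$. Indeed, by Proposition~4.1, if an indecomposable summand $(a,b)$ of the maximal rigid object has $b\ne n-1$, then there is an $i$ with $(a,b+i)$ or $(a-i,b+i)$ in $\add R$, which "pins down" $(a,b)$ from above and forces the approximation triangles to have a unique indecomposable middle term on at least one side — this is exactly the generic situation where $\Ext^1$ is $1$-dimensional. So a $2$-dimensional $\Ext^1$ can only occur at a summand of quasi-length $n-1$. Writing $T'=(a,n-1)$ and applying Proposition~4.1 to its wing, there are $(a,h-1)$ and $(a+h,n-h-1)$ in $\add(T'\oplus\overline{T})$; the minimal right $\overline{T}$-approximation of $T'$ is then built from these two summands, and tracing through the Auslander--Reiten structure of the tube (using $\tau(a,b)=(a-1,b)$ and uniseriality) yields the exchange triangle
\begin{equation*}
(a,n-1)\rightarrow(a+h,n-h-1)\oplus(a+h,n-h-1)\rightarrow(a+h,n-1)\rightarrow(a,n-1)[1],
\end{equation*}
with the companion triangle obtained symmetrically (or by applying $\mu$ again). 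The doubling of $(a+h,n-h-1)$ is precisely the geometric shadow of $\dim\Ext^1=2$.

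For the case $\dim\Ext^1(T',T'')=1$: write $T'=(a,b)$ with $b\le n-2$ (the previous paragraph shows quasi-length $n-1$ is the only source of $\Ext^1=2$, but I should also check $b=n-1$ can still give $\Ext^1=1$, handled by the same analysis with $h$ allowed to range so that the middle does not double). Apply Proposition~4.1: there are $(a,h-1)$ and $(a+h,b-h)$ in $\add R$ for some $1\le h\le b$, and since $b\ne n-1$ there is $1\le i\le n-b-1$ with $(a,b+i)$ or $(a-i,b+i)$ in $\add R$. The minimal right $\overline{T}$-approximation of $(a,b)$ is then read off as $(a,b+i)\oplus(a+h,b-h)$ (the summand from above plus the summand from the left sub-wing), and computing the cone in $\C_n$ — again via uniseriality and the formula for $\ker$/$\coker$ under approximations as in Theorem~3.2 — gives the middle term $(a+h,b-h+i)$, hence the triangle
\begin{equation*}
(a,b)\rightarrow(a,b+i)\oplus(a+h,b-h)\rightarrow(a+h,b-h+i)\rightarrow(a,b)[1];
\end{equation*}
the second triangle follows by the dual approximation computation. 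The main obstacle I anticipate is the bookkeeping in identifying which summands of $\overline{T}$ actually appear in the minimal approximations and showing the middle terms are exactly as claimed with no spurious summands: this requires combining Proposition~4.1's wing structure with a careful case analysis of $\T$-maps versus $\D$-maps in the cluster tube and the non-split/minimality of the exchange triangles. Once the approximation triangles are pinned down, translating them into the stated forms is routine, and the $\Ext^1$-dimension count is then confirmed a posteriori by counting the non-split extensions realized by these triangles (two when the middle term doubles, one otherwise).
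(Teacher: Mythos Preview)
Your plan via Proposition~4.1 is the paper's strategy, but the logic is inverted and one step is a real gap. You try to bound $\dim\Ext^1(T',T'')\le 2$ a priori and then branch on its value; the paper instead branches on whether the summand being mutated has quasi-length $n-1$ or $\le n-2$, computes the exchange triangles in each case, and reads off the $\Ext^1$-dimension afterwards. This ordering removes your hedge ``I should also check $b=n-1$ can still give $\Ext^1=1$'': it never does, and the direct computation makes that evident rather than something to be argued separately.

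The substantive gap is in the quasi-length $n-1$ case: ``tracing through the Auslander--Reiten structure'' does not explain why the middle term \emph{doubles} to $(a,h-1)^{\oplus 2}$. The paper's mechanism is specific: maps from the wing of $(a+h,n-h-1)$ to $(a,n-1)$ vanish, so every map from $\overline{R}$ to $(a,n-1)$ factors through $(a,h-1)$; then $\dim\Hom_{\C_n}((a,h-1),(a,n-1))=2$ and this Hom-space is \emph{not} cyclic as an $\End_{\C_n}((a,h-1))$-module, forcing two copies of $(a,h-1)$ in the minimal right approximation. That non-cyclicity is the crux and is absent from your sketch. (Minor: in your displayed triangle the map out of $(a,n-1)$ is a minimal \emph{left} approximation, not right.) For $b\le n-2$ the paper also takes a shortcut you miss: rather than computing approximations and cones, it simply observes that the two stated triangles are already non-split in $D^b(\T_n)$, and since $\dim\Ext^1_{\C_n}((a,b),(a+h,b-h+i))=1$ there is up to isomorphism only one non-split triangle in each direction, so these \emph{must} be the exchange triangles. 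Your direct approximation-and-cone computation would work but is exactly the ``bookkeeping'' you yourself flag as the main obstacle; the paper's argument sidesteps it.
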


\begin{proof}
Let $(a,n-1)\oplus\overline{R}$ be a basic maximal rigid object,
$1\leq a\leq n$. Then by Lemma \ref{Lem:conofmr}, there is an $h$, $1\leq h\leq n-1$, such that
$(a,h-1)$ and $(a+h,n-h-1)$ are in $\add \overline{R}$
and other indecomposable summands of $\overline{R}$ are in the wing
of $(a,h-1)$ or in the wing of $(a+h,n-h-1)$. See Figure \ref{Fig:mutofmr1}. Now
$(a+h,n-1)\oplus\overline{R}$ is a basic maximal rigid object.

\begin{figure}[h]
\centering
\includegraphics[totalheight=1.4in]{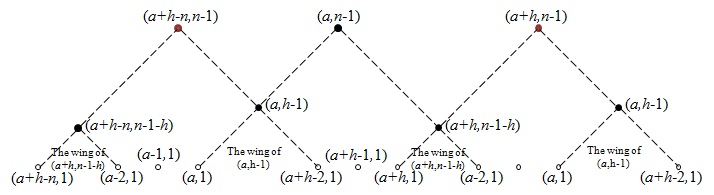}
\caption{Mutations of maximal rigid objects as in the first part of the proof of Lemma \ref{Lem:mutofmr}.}
\label{Fig:mutofmr1}
\end{figure}

Consider a nonzero $\T-$map $f:(a+h,n-1)\rightarrow (a,n-1)[1]=(a-1,n-1)$, we have a triangle in $\C_n$:
\[C\rightarrow (a+h,n-1)\s{f}\rightarrow (a,n-1)[1]\rightarrow C[1]\]
with an exact sequence
\[0\rightarrow \coker(\tau^{-1}f)\rightarrow C\rightarrow \ker(f)\rightarrow 0\]
in $\T_n$ (cf. \cite{Y}). Since $\ker(f)=(a+h,n-1-h)$, $\coker(\tau^{-1}f)=(a+h,n-1-h)$, and
$(a+h,n-1-h)$ is rigid in $\T_n$, we have that $C\cong (a+h,n-1-h)\oplus (a+h,n-1-h)$.
Thus we get a triangle in $\C_n$:
\[(a,n-1)\rightarrow (a+h,n-1-h)\oplus (a+h,n-1-h)\rightarrow (a+h,n-1)\s{f}\rightarrow (a,n-1)[1]\]
and it is the exchange triangle starting at $(a,n-1)$ associated to $\overline{R}$
since $(a+h,n-1)$ is another complement of $\overline{R}$ and $(a+h,n-1-h)\in\add \overline{R}$.
The exchange triangle starting at $(a+h,n-1-h)$ associated to $\overline{R}$ can be obtained similarly.
Clearly, $\dim\Ext_{\C_n}^1((a,n-1),(a+h,n-1))=2$.

\medskip

Let $(a,b)\oplus\overline{R}$ be a basic maximal rigid
object with $1\leq a\leq n$, $b\leq n-2$. By Lemma \ref{Lem:conofmr},
we can assume that there is the minimum $i$, $1\leq i\leq n-b-1$,
such that $(a,b+i)\in \overline{R}$. By the construction of maximal
objects, $(a+b+1,i-1)\in\overline{R}$, see Figure \ref{Fig:mutofmr2}.
Then $(a+h,b-h+i)\oplus\overline{R}$ is a basic maximal rigid object.
Clearly, $\dim\Ext_{\C_n}^1((a,b),(a+h,b-h+i))=1$.

\begin{figure}[h]
\centering
\includegraphics[totalheight=1.4in]{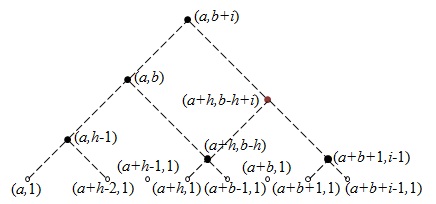}
\caption{Mutations of maximal rigid objects as in the second part of the proof of Lemma \ref{Lem:mutofmr}.}
\label{Fig:mutofmr2}
\end{figure}

There is an obvious non-split triangle in $\C_{n}$:
\[(a,b)\rightarrow (a,b+i)\oplus(a+h,b-h)\rightarrow(a+h,b-h+i)\rightarrow (a,b)[1],\]
which is the exchange triangle starting at $(a,b)$ associated to $\overline{R}$.
To get the other exchange triangle, we consider a nonzero $\T-$map $f:(a,b)\rightarrow(a+h,b-h+i)[1]=(a+h-1,b-h+i)$.
We have that $\ker(f)=(a,h-1)$ and $\coker(\tau^{-1}f)=(a+b+1,i-1)$ in $\T_n$.
Since $\Ext_{\T_n}^1((a,h-1),(a+b+1,i-1))=0$, we get a non-split triangle in $\C_n$:
\[(a+h,b-h+i)\rightarrow (a+b+1,i-1)\oplus(a,h-1)\rightarrow(a,b)\rightarrow (a+h,b-h+i)[1]\]
which is what we need.

\end{proof}

The main result in this section is that the cluster map defined in Section \ref{Sec:cha} satisfies the cluster structure of $\C_n$.

\begin{thm}\label{Thm:2}
Given two basic maximal rigid objects $T'\oplus\overline{T}$ and
$T''\oplus\overline{T}$ in $\C_n$ such that both $T'$ and $T''$ are
indecomposable. Then $X_{T'}X_{T''}=X_E+X_{E'}$, where
$T'\rightarrow E\rightarrow T''\rightarrow T'[1]$ and
$T''\rightarrow E'\rightarrow T'\rightarrow T''[1]$ are the exchange
triangles.
\end{thm}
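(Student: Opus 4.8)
The strategy is to verify the identity $X_{T'}X_{T''}=X_E+X_{E'}$ by a direct case analysis, using the explicit description of the exchange triangles from Lemma 4.2 together with the explicit Laurent-polynomial formulas for $X_{(a,b)}$ from Theorem 3.7. First I would split into the two cases dictated by Lemma 4.2: the case $\dim\Ext^1_{\C_n}(T',T'')=2$ (where both $T'$ and $T''$ are of the form $(\ast,n-1)$ and lie in subset $II$, while the middle terms are squares $(\ast,m)^{\oplus2}$ and hence by Definition 3.3 their $X$-values are squares of single factors), and the generic case $\dim\Ext^1_{\C_n}(T',T'')=1$, where the triangles are $(a,b)\to(a,b+i)\oplus(a+h,b-h)\to(a+h,b-h+i)\to$ and $(a+h,b-h+i)\to(a+b+1,i-1)\oplus(a,h-1)\to(a,b)\to$. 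In each case $X_{T'}X_{T''}$, $X_E$, $X_{E'}$ are concrete rational functions of $x_1,\dots,x_{n-1}$, so the claim becomes an algebraic identity to be checked.

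Second, within each case I would further subdivide according to which of the subsets $O$, $I$, $II$ each of the five or six objects $T'$, $T''$, and the indecomposable summands of $E$ and $E'$ belongs to — this is governed by whether a quasi-length is $\le n-1$ and whether the socle index plus quasi-length exceeds $n$. For the generic case, the object $(a,b)$ can be in $O$, $I$, or $II$; the other objects' memberships are then forced by the arithmetic of the triangle (e.g. $(a,b+i)$ has the same socle $a$ but larger length, and $(a+b+1,i-1)$ is obtained by a socle shift). For each combination I would substitute the relevant branch of the Theorem 3.7 formula and reduce both sides to a common denominator. The arithmetic is routine but bookkeeping-heavy; the three-term sums $\sum_{k=0}^{\bullet}\tfrac1{x_{\bullet-k}x_{\bullet-k+1}}$ telescope nicely when multiplied out, which is what makes the identity work.

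Third, I should reconcile this with the formal mutation rule for $X$-characters: since $T$ has the matrix $A_T$ of type $C_{n-1}$, one expects that for the distinguished maximal rigid object the exchange relations literally coincide with the cluster-variable exchange relations $x_kx'_k=\prod_{a_{ik}>0}x_i^{a_{ik}}+\prod_{a_{ik}<0}x_i^{-a_{ik}}$, and this should serve as a sanity check in Example 3.8's cases (e.g. $X_{T_1}X_{\tau^{-1}T_1}=X_{\tau^{-1}T_2}^2+1$ in $\C_3$, matching the $C_2$ relation with a doubled arrow). One subtlety that must be handled carefully is that some objects appearing as middle terms have a quasi-length reaching $n-1$ exactly, or have a summand that is $(a,0)$ which, per the convention stated before Proposition 4.1, is the zero object and contributes a factor $X_0=1$ (empty product); the formula $x_m=1$ for $m=n$ from just before Theorem 3.7 is also needed at the boundary.

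**Main obstacle.** The hard part will be the sheer number of boundary cases in the $\dim\Ext^1=1$ situation: the position of $(a,b)$ relative to the strips $b=n-1$, $b=n-2$, $a+b=n$, $a+b=n+1$, and $a=1$ interacts with the induced positions of all the other objects, and each transition across a strip changes which branch of the Theorem 3.7 formula applies; moreover in subset $II$ the formula $X_{(a,b)}$ already differs between $b=n-1$ and $b\le n-2$. Organizing these so that no combination is missed — while exploiting symmetries (the two exchange triangles are swapped under $T'\leftrightarrow T''$, so only "half" the cases are genuinely new) and the telescoping of the sums — is where the real work lies. I expect the cleanest route is to prove a small number of auxiliary Laurent-polynomial identities (one per pair of adjacent subsets) and then assemble the theorem from them, rather than attacking each exchange pair in isolation.
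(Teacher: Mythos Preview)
Your proposal is essentially the same approach as the paper's own proof: a direct case analysis splitting first on $\dim\Ext^1(T',T'')\in\{1,2\}$ via Lemma 4.2, then on the subset membership ($O$, $I$, $II$) of the objects involved, and in each subcase plugging in the explicit Laurent polynomials from Theorem 3.7 to verify the identity by hand. One minor correction: in the $\dim\Ext^1=2$ case it is not true that both $(a,n-1)$ and $(a+h,n-1)$ lie in $II$ --- when $a=1$ the object $(1,n-1)=T_1$ lies in $O$ --- so the paper (and you) must treat $a=1$ as a separate subcase, but your plan to subdivide by subset already accommodates this.
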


\begin{proof}

Since both the formulas of cluster maps in Theorem \ref{Thm:1} and the exchange triangles in Lemma \ref{Lem:mutofmr}
depend on the positions of objects, we can check that $X_{T'}X_{T''}=X_E+X_{E'}$ case by case.
We omit the details here.

\end{proof}

For the fixed maximal rigid object $T=\oplus_{i=1}^{n-1} T_i$, where
$T_i=(1,n-i)$, from Lemma \ref{Lem:conofmr} (or Proposition 3.4 in \cite{BMV}), we have that the matrix $A_T$
associated to $T$ is \[A_T=
\left(\begin{array}{cccccc}0&1&&&&\\
-2&0&1&&&\\
&-1&0&\ddots&&\\
&&\ddots&\ddots&\ddots&\\
&&&\ddots&0&1\\
&&&&-1&0\end{array}\right).\] Its Cartan part is of type
   $C_{n-1}$.

Let indr$\C_n$ be the set of indecomposable rigid objects in $\C_n$,
and $\chi_{A_{T}}$ be the set of cluster variables of the
skew-symmetrizable matrix $A_T$ associated to $T$. Then we have the
following:

\begin{cor} The map $X_{?}:
\mbox{indr}\C_n\longrightarrow \chi_{A_{T}}: M\mapsto X_M$ is a
bijection. This bijection induces a bijection between the set of isoclasses of basic
maximal rigid objects in $\C_n$ and the set of clusters of type $C_{n-1}$. Furthermore the algebra
generated by all $X_M$, where $M$ runs through indr$\C_n$, is
isomorphic to the cluster algebra of type $C_{n-1}$.
\end{cor}

\begin{proof} From Theorem \ref{Thm:1}, we have that $X_{T_i}=x_i$.
Then $X_{?}$ sends the couple $(\{T_1,\cdots, T_{n-1}\}, A_T)$ to
the initial seed $(\{x_1,\cdots, x_{n-1}\}, A_T)$ of the cluster
algebra $\A_{A_T}$. By Lemma \ref{Lem:mutofmr} that the exchange
graph of maximal rigid objects in $\C_n$ is connected. Then by
Theorem \ref{Thm:2} and Theorem 1.1 in \cite{BMV}, $X_{?}(\mbox{indr}\C_n)\subset\chi_{A_{T}}$, $X_{?}:
\mbox{indr}\C_n\longrightarrow \chi_{A_{T}}$ is surjective and $X_{?}$ induces a map
sending basic maximal rigid objects to clusters. It
follows from Theorem 3.7 that the denominators of $X_M$ are
different for all indecomposable rigid objects. Then $X_{?}:
\mbox{indr}\C_n\longrightarrow \chi_{A_{T}}$ is injective. Thus the statements hold.
\end{proof}

\section{Cluster complex of type C}\label{Sec:complex}

In this section, we use the results proved in Sections \ref{Sec:cha}, \ref{Sec:mu} to prove that
the combinatorics of indecomposable rigid objects in $\C_n$ encodes
the cluster combinatorics of the root system of type $C$ and type
$B$. Cluster complexes were defined in \cite{FZ2} for finite root systems.
They were realized by quiver representations via decorated representations \cite{MRZ},
and later via cluster categories of the corresponding quivers \cite{BMRRT,Z}.
Combining with the geometric description of the cluster complex of the root system of type $B$ \cite{FZ2},
Buan-Marsh-Vatne \cite{BMV} give a realization of this cluster complex via cluster tubes.

We recall the cluster complex associated to any finite root system
 from \cite{FZ2}. Let $\Phi$ be any finite root system with simple roots $\alpha_1,\cdots, \alpha_n$ and $\Phi_{\geq-1}$ the set of almost
positive roots in $\Phi$, i.e. the union of positive roots with
negative simple roots. Fomin and Zelevinsky define a function
\[(-\mid\mid-):\Phi_{\geq-1}\times\Phi_{\geq-1}\rightarrow\mathbb{Z}_{\geq0},\]
called the compatibility degree. A pair of roots $\alpha,\beta$ in
$\Phi_{\geq-1}$ are compatible if $(\alpha\mid\mid\beta)=0$. The
cluster complex $\Delta (\Phi)$ associated to $\Phi$ is a simplicial
complex, the set of vertices is $\Phi_{\geq-1}$ and the simplices
are mutually compatible subsets of $\Phi_{\geq-1}$. This
combinatorial object has many interesting properties and
applications, we refer to the survey \cite{FR} for further reading.

For the cluster tube $\C_n$, we call a set of indecomposable objects
a rigid subset provided the direct sum of all indecomposable objects
in this set is rigid. Now we define a simplicial complex associated
to the cluster tube $\C_n.$ We always assume $n>1$ throughout this
section.

\begin{defn} Let $\C_n$ be the cluster tube of rank $n$. The
cluster complex $\Delta (\C_n)$ associated to $\C_n$ is a simplicial complex whose
vertices are the isoclasses of indecomposable rigid objects and whose simplices are
the the isoclasses of rigid subsets of $\C_n$.
\end{defn}

Now fix a root system $\Phi^{C} $ of type $C_{n-1}$,  In \cite{FZ1},
Fomin-Zelevinsky give a bijection from the
set of cluster variables of the cluster algebras of type $C_{n-1}$
to $\Phi^{C}_{\geq-1}$ (they gave this bijection for all finite
root systems). Under this bijection, when a cluster variable $y$ is
expressed as
\[y=\frac{P(x)}{x^\alpha}\] where $P$ is a polynomial which is not
divisible by $x_i$ for every $i$, the corresponding almost
positive root is $\alpha$. Therefore, combining this bijection with
the bijective map $X_?$ from indr$\C_n$ to $\chi_{A_{T}}$ in
Section \ref{Sec:cha}, we have a bijection from $\Phi^{C}_{\geq-1}$ to indr$\C_n$.
This map is denoted by $M_T$. So, for any $\alpha\in \Phi^{C}_{\geq
-1}$, we denote by $M_T(\alpha)$ the object in $\C_n$ corresponding to $\alpha$ under
this bijection.

\begin{thm}\label{Thm:3}
Let $\Phi^{C}$ be the root system of type $\C_{n-1}$. Then the
 map $M_T$ induces an isomorphism from the cluster
complex $\Delta(\Phi ^{C})$ to the cluster complex $\Delta (\C_n)$,
which sends vertices to vertices, and simplices to simplices.
\end{thm}

To prove the theorem, we need some preparation:

\begin{defn}
For any two almost positive roots $\alpha,
\beta\in\Phi^{C}_{\geq-1}$, we define the $T-$compatibility degree
$(\alpha\mid\mid\beta)_{T}$ of $\alpha,\beta$ by
\[
(\alpha\mid\mid\beta)_{T}=\frac{\dim\Ext^1(M_T(\alpha),M_T(\beta))}{\dim\End(M_T(\alpha))}.
\]
\end{defn}

As in \cite{FZ2}, let $\sigma_i$ be the permutation of $\Phi^{C}_{\geq-1}$
defined as follows: \[\sigma_i(\alpha)=\begin{cases}\alpha & if
\alpha=-\alpha_j,j\neq i,\\s_i(\alpha) & otherwise,\end{cases}\]
where $s_i$ is the the Coxeter generator of the Weyl group of $\Phi^{C}$ corresponding to $i$.
We denote by $R$ the Coxeter element $\sigma_1\cdots\sigma_{n-1}$ in the Coxeter group (compare \cite{Z}).
The function $(-\mid\mid-)$ is determined by the following two properties \cite{FZ2}:
\[(-\alpha_i\mid\mid\beta)=\max([\beta:\alpha_i],0),\]
\[(R\alpha\mid\mid R\beta)=(\alpha\mid\mid\beta),.\]
where $[\beta:\alpha_i]$ denotes the coefficient of $\alpha_i$ in the expansion of $\beta$ in the basis $\{\alpha_1,\cdots,\alpha_n\}$.
We will prove that the function $(-\mid\mid-)_{T}$ satisfies the same properties.

\begin{lem}
For any $\alpha\in\Phi^{C}_{\geq-1}$, $M_T(R\alpha)=\tau M_T(\alpha)$.
\end{lem}

\begin{proof} Due to indr$\C_n=O\bigcup I\bigcup II$ (please see the
definitions of $O,I, II$ and Figure \ref{Fig:five} in section \ref{Sec:Pre}),
we divide the proof into several cases according to the position of
the object $M_T(\alpha)$.

1. When $M_T(\alpha)=(a,b)\in O$, then $\alpha=-\alpha_{n-b}$. We
have
\[R\alpha=\sigma_1\cdots\sigma_{n-1}(\alpha)=\sum_{i=1}^{n-b}\alpha_i.\]
Hence $M_T(R\alpha)=(n,n-b)=\tau(1,n-b)=\tau M_T(\alpha)$.

2. When $M_T(\alpha)=(a,b)\in I$, then $\alpha=\sum_{i=n-a-b+2}^{n-a+1}\alpha_i$. If $a\neq 2$, then
\[R\alpha=\sum_{i=n-a-b+3}^{n-a+2}\alpha_i.\] Hence
$M_T(R\alpha)=(a-1,b)=\tau (a,b)=\tau M_T(\alpha)$. If $a=2$, then
\[R\alpha=-\alpha_{n-b}.\] Hence $M_T(R\alpha)=(1,b)=\tau(2,b)=\tau
M_T(\alpha)$.

3. When $M_T(\alpha)=(2,n-1)\in II$, then
 $\alpha=\alpha_1+2\sum_{i=2}^{n-1}\alpha_i$. We have \[R\alpha=-\alpha_1.\] Hence $M_T(R\alpha)=(1,n-1)=\tau(2,n-1)=\tau
M_T(\alpha)$.

4. When $M_T(\alpha)=(a,n-1)\in II$ with $3\leq a\leq n$, then
 $\alpha=\alpha_1+2\sum_{i=2}^{n-a+1}\alpha_i$. We have \[R\alpha=\alpha_1+2\sum_{i=2}^{n-a+2}\alpha_i.\] Hence
$M_T(R\alpha)=(a-1,n-1)=\tau(a,n-1)=\tau M_T(\alpha)$.

5. When $M_T(\alpha)=(a,b)\in II$ with $a+b=n+1$ and $2\leq b\leq
n-2$, then
 $\alpha=\alpha_1+2\sum_{i=1}^{n-a+1}\alpha_i+\sum_{i=n-a+2}^{2n-a-b}\alpha_i$.
It follows that \[R\alpha=\sum_{i=2}^{n-a+2}\alpha_i.\] Hence
$M_T(R\alpha)=(a-1,b)=\tau(a,b)=\tau M_T(\alpha)$.

6. When $M_T(\alpha)=(n,1)\in II$, then
 $\alpha=\sum_{i=1}^{n-1}\alpha_i$. It follows that \[R\alpha=\alpha_2.\] Hence
$M_T(R\alpha)=(n-1,1)=\tau(n,1)=\tau M_T(\alpha)$.

7. When $M_T(\alpha)=(a,b)\in II$ with $a=n$ and $2\leq b\leq n-2$,
then  $\alpha=\sum_{i=1}^{n-b}\alpha_i$. It follows that
 \[R\alpha=\alpha_1+2\alpha_2+\sum_{i=3}^{n-b+1}\alpha_i.\] Hence
$M_T(R\alpha)=(a-1,b)=\tau(a,b)=\tau M_T(\alpha)$.

8. When $M_T(\alpha)=(a,b)\in II$ with $a < n, 1 < b < n -1$ and $a + b > n + 1$, then
$\alpha=\alpha_1+2\sum_{i=2}^{n-a+1}\alpha_i+\sum_{i=n-a+2}^{2n-a-b}\alpha_i$.
 It follows that
\[R\alpha=\alpha_1+2\sum_{i=2}^{n-a+2}\alpha_i+\sum_{i=n-a+3}^{2n-a-b+1}\alpha_i.\]
Hence $M_T(R\alpha)=(a-1,b)=\tau(a,b)=\tau M_T(\alpha)$.

The proof of this lemma is completed.
\end{proof}

Using the dimension formulas in the proof of Theorem \ref{Thm:1}, we have
the following fact:

\begin{lem}
For any positive root $\beta$ and any $i$,
$[\beta:\alpha_i]=\frac{\dim\Hom(T_i[-1],M_T(\beta))}{\dim\End(T_i)}$.
\end{lem}

\begin{lem} The $T-$compatibility degree satisfies the following conditions:
\begin{equation}
(-\alpha_i\mid\mid\beta)_{T}=\max([\beta:\alpha_i],0),
\end{equation}
\begin{equation}
(R\alpha\mid\mid R\beta)_{T}=(\alpha\mid\mid\beta)_{T},
\end{equation}
for any $\alpha,\beta\in\Phi^{C}_{\geq-1}$, any $1\leq i\leq n-1$.
\end{lem}

\begin{proof}
 By the definition,
\[(-\alpha_i\mid\mid\beta)_{T}=\frac{\dim\Ext^1(T_i,M_T(\beta))}{\dim\End(T_i)}
=\frac{\dim\Hom(T_i[-1],M_T(\beta))}{\dim\End(T_i)}.\] Then by Lemma
5.3 it equals $[\beta:\alpha_i]$ if $\beta$ is a positive root, or 0
otherwise. This proves that (1) holds. From Lemma 5.2, we have
\[(R\alpha\mid\mid
R\beta)_T=\frac{\dim\Ext^1_{\C}(M_T(R\alpha),M_T(R\beta))}{\dim\End_{\C}(M_T(R\alpha))}
=\frac{\dim\Ext^1_{\C}(\tau M_T(\alpha),\tau
M_T(\beta))}{\dim\End_{\C}(\tau M_T(\alpha))}
=(\alpha\mid\mid\beta)_T.\]
This proves that (2) holds.

\end{proof}

\begin{proof}[of Theorem~{\rm\ref{Thm:3}}]
By Lemmas 5.3, 5.4, the compatibility degree $(-||-)_T$
 is the same as $(-||-)$ in \cite{FZ2}. It follows that
$\alpha,\beta$ are compatible if and only if
$M_T(\alpha),M_T(\beta)$ form a rigid subset. Therefore $M_T$
induces the desired bijection from $\Delta(\Phi^{C})$ to
$\Delta(\C_n)$.

\end{proof}

Let $\Phi^{B}$ be the root system of type $B_{n-1}$. Then
$\Phi^B_{\geq-1}$ is the dual of $\Phi^C_{\geq-1}$ via
$\alpha\mapsto \alpha^{\vee}$. So
$(\alpha\mid\mid\beta)=(\beta^\vee\mid\mid\alpha^\vee)$ (Proposition
3.15 \cite{FZ2}).  Then we have the following corollary (compare with Theorem
3.5 in \cite{BMV}).

\begin{cor} Let $\Phi^{B}$ be the root system of type $B_{n-1}$. Then the cluster complex $\Delta(\Phi^{B})$
of $\Phi$ of type $B_{n-1}$ is isomorphic to
$\Delta(\C_n)$.
\end{cor}

\begin{acknowledgements}\label{ackref}
The authors would like to thank Aslak Bakke Buan, Bin Li, Yann Palu and Dong Yang for their
interesting, helpful discussions and suggestions. The authors would like to thank the referee for his/her helpful comments to improve the paper.
\end{acknowledgements}

\affiliationone{
   Yu Zhou\\
   Department of Mathematical Sciences, Tsinghua University, 100084, Beijing\\
   China
   \email{yuzhoumath@gmail.com}}
\affiliationtwo{
   Bin Zhu\\
   Department of Mathematical Sciences, Tsinghua University, 100084, Beijing\\
   China
   \email{bzhu@math.tsinghua.edu.cn}}
\affiliationthree{%
   Current address:\\
   Fakult\"{a}t f\"{u}r Mathematik, Universit\"{a}t Bielefeld, D-33501, Bielefeld\\
   Germany
   \email{yuzhoumath@gmail.com}}
\end{document}